\DeclareMathSymbol{\twoheadrightarrow} {\mathrel}{AMSa}{"10}
\def\bbQ{{\mathbb Q}}
         \def\bbb{\mathbf{b}}
        \def\cC{{\mathfrak C}}
        \def\cV{{\mathcal C}}
\def\bbZ{{\mathbb Z}}
\def\bbC{{\mathbb C}}
\def\bbR{{\mathbb R}}
\def\Gal{\mathrm{Gal}}
\def\ord{\mathrm{ord}}
\def\End{\mathrm{End}}
\def\Aut{\mathrm{Aut}}
\def\Hom{\mathrm{Hom}}
\def\Mat{\mathrm{Mat}}
\def\cJ{{\mathcal J}}
        \def\K_a{\bar{K}}
\def\dim{\mathrm{dim}}
\def\OC{{\mathcal O}}
\def\fm{{\mathfrak m}}
\def\cZZ{{\mathcal Z}}
\newtheorem{thm}{Theorem}[section]
\newtheorem{lem}[thm]{Lemma}
\newtheorem{cor}[thm]{Corollary}
\newtheorem{prop}[thm]{Proposition}
\theoremstyle{definition}
\newtheorem{defn}[thm]{Definition}
\newtheorem{ex}[thm]{Example}
\newtheorem{rem}[thm]{Remark}
\newtheorem{sect}[thm]{}
\begin{document}
\title[Jacobians with with automorphisms of prime order]{Jacobians with automorphisms of prime order}
\author{Yuri G. Zarhin}
\address{Department of Mathematics, Pennsylvania
State University, University Park, PA 16802, USA}
\email{zarhin\char`\@math.psu.edu}

\thanks{The  author  was partially supported by Simons Foundation Collaboration grant   \# 585711  and the Travel Support for Mathematicians Grant MPS-TSM-00007756 from the Simons Foundation. This paper  was finished in August-September 2025 during his stay at Max-Planck-Institut f\"ur Mathematik (Bonn, Germany), whose hospitality and support are gratefully acknowledged.}
\begin{abstract}
In this paper we study principally polarized complex abelian varieties $(X,\lambda)$ that admit an automorphism $\delta$ of
prime order $p>2$. It turns out that certain natural conditions on the multiplicities of the action of $\delta$ on $\Omega^1(X)$
do guarantee that those polarized varieties are not canonically polarized
jacobians of curves. 
\end{abstract}

\maketitle

\section{Introduction}
The work of Clemens and Griffiths \cite{GC}  on intermediate jacobians of threefolds  and their applications to the L\"{u}roth problem increased an interest in the classical
 question - how to find out  that a  given  principally polarized $g$-dimensional complex abelian variety $(X,\lambda)$
is not isomorphic to the  canonically polarized jacobian $(\cJ,\Theta)$  of a smooth irreducible projective curve $\cV$ of genus $g$.  In this paper we address this question in the case when $(X,\lambda)$ admits an additional symmetry - an automorphism
$\delta$ of prime period $p>2$ that satisfies the $p$th cyclotomic equation. We  give our answer in terms of the {\sl multiplicity function}
$\mathbf{a}_{X,\delta}$, which assigns to each $h \in (\bbZ/p\bbZ)^{*}$ the multiplicity  $\mathbf{a}_X(h)$ of the eigenvalue $\zeta_p^h$ of
the linear operator $\delta_{\Omega}$ in  the space $\Omega^1(X)$ of differentials of the first kind on $X$  induced by $\delta$. Namely, we describe explicitly  all integer-valued 
 (we call them {\sl strongly admissible})
functions  $f: (\bbZ/p\bbZ)^{*} \to \bbZ_{+}$ that enjoy the following property:  there exists  a triple $(\cJ,\Theta, \delta)$ as above such that $f=\mathbf{a}_{\cJ,\delta}$.

It turns out that not every function $\mathbf{a}_{X,\delta}$ coincides with some $\mathbf{a}_{\cJ,\delta}$. For example, if $p=3$ then there are precisely
$(g+1)$ functions of type $\mathbf{a}_{X,\delta}$; however, there are approximately only $g/3$ functions  of type $\mathbf{a}_{\cJ,\delta}$ (Section \ref{Peq3}, see also \cite{ZarhinManinFest}).

The   paper is organized as follows. In Section \ref{s1} we study canonically polarized jacobians $(\cJ,\Theta)$ of curves $\cV$ such  that there is automorphism $\delta\in \Aut(\cJ,\Theta)$ with
properties described above. It turns out that $\delta$ is induced by an automorphism $\delta_{\cV}$ of  $\cV$ of order $p$.  It turns out that the number of fixed points of $\delta_{\cV}$
is $\frac{2g}{p-1}+2$ and each of these points $P$  is nondegenerate, i.e., its {\sl index} $\epsilon_P$ is a primitive $p$th root of unity.
This gives rise to the integer-valued function
$\mathbf{b}: (\bbZ/p\bbZ)^{*} \to \bbZ_{+}$ that assigns to each $h \in (\bbZ/p\bbZ)^{*} $ the number of fixed points of $\delta_{\cV}$   with index $\zeta_p^h$.
Our main result (Theorem \ref{nonjacob}) expresses explicitly the function  $\mathbf{a}_{\cJ,\delta}$ in terms of the function $\mathbf{b}$, which imposes
the restrictions on the function  $\mathbf{a}_{\cJ,\delta}$.  In Section \ref{construction} we prove Theorem \ref{inverse}, which, in particular,  asserts that the
necessary condition for a function $f:  (\bbZ/p\bbZ)^{*} \to \bbZ_{+}$ to coincide with some $\mathbf{a}_{\cJ,\delta}$ imposed by Theorem \ref{inverse} is actually sufficient.
In Section \ref{Peq3} we discuss in detail the case $p=3$.  Section \ref{CMcase} deals with CM abelian varieties of dimension $(p-1)/2$. In Section \ref{RosatiS}
we discuss  certain principally polarized abelian varieties that are not isomorphic as an allgebraic variety to jacobians.

{\bf Acknowledgements} I am grateful to George Andrews for a very informative letter about partitions.
I thank the referees for thoughtful comments that helped to improve the exposition.

\section{Principally polarized abelian varieties with automorphisms}
\label{s1}

We write $\bbZ_{+}$ for the set of {\sl nonnegative} integers, $\bbQ$ for the field of rational numbers
and $\bbC$ for the field of complex numbers. We have
$$\bbZ_{+}\subset \bbZ \subset \bbQ\subset \bbR\subset \bbC$$
where $\bbZ$ is the ring of integers and $\bbR$ is the field of real numbers. If $B$ is a finite (may be, empty) set then we write $\#(B)$ for its cardinality.
Let $p$ be an odd prime and $\zeta_p \in \bbC$ a primitive (complex) $p$th root of unity.
It generates  the multiplicative order $p$ cyclic group $\mu_p$ of $p$th roots of unity.
We write $\bbZ[\zeta_p]$ and $\bbQ(\zeta_p)$ for the $p$th cyclotomic ring and the $p$th cyclotomic field respectively. We have
$$\zeta_p\in \mu_p\subset \bbZ[\zeta_p]\subset \bbQ(\zeta_p)\subset \bbC.$$

Let $g\ge 1$ be an  integer and  $(X,\lambda)$ a principally polarized $g$-dimensional
abelian variety over the field $\bbC$ of complex numbers, $\delta$ an automorphism of
$(X,\lambda)$ that satisfies the cyclotomic equation
\begin{equation}
\label{cyclotomic}
\sum_{j=0}^{p-1}\delta^j=0 \in \End(X).
\end{equation}
 In other words, $\delta$ is a periodic automorphism
of order $p$, whose set of fixed points is finite. This gives rise to the embeddings
$$\bbZ[\zeta_p]\hookrightarrow \End(X), 1\mapsto 1_X,\ \zeta_p\mapsto \delta,$$
$$\bbQ(\zeta_p)\hookrightarrow \End^{0}(X), 1\mapsto 1_X,\ \zeta_p\mapsto \delta.$$
(Hereafter we write $1_X$ for the identity automorphism of $X$.)
Since the degree $[\bbQ(\zeta_p):\bbQ]=p-1$, it follows from  \cite[Ch. 2, Prop. 2]{ST} (see also \cite[Part II, p. 767]{Ribet}) that
\begin{equation}
\label{divide}
(p-1) \mid 2g.
\end{equation}
By functoriality, $\bbQ(\zeta_p)$ acts on the $g$-dimensional complex
vector space $\Omega^1(X)$ of  differentials of the first kind on
$X$. This endows $\Omega^1(X)$ with the structure of a

\noindent $\bbQ(\zeta_p)\otimes_{\bbQ}\bbC$-module. Clearly,
 $$\bbQ(\zeta_p)\otimes_{\bbQ}\bbC=\oplus_{j=1}^{p-1} \bbC$$
 where the $j$th summand corresponds to the field embedding $\bbQ(\zeta_p) \hookrightarrow \bbC$ that sends
 $\zeta_p$ to $\zeta_p^{j}$. So, $\bbQ(\zeta_p)$ acts on
 $\Omega^1(X)$ with multiplicities $a_j$ ($j=1, \dots p-1$). Clearly, all
 $a_j$  are nonnegative integers and
 \begin{equation}
 \label{sumA}
 \sum_{j=1}^{p-1}a_j=g.
 \end{equation}
 
 In addition,  
 \begin{equation}
 \label{Hsymmetry}
 a_j+a_{p-j} =\frac{2g}{p-1} \ \forall j=1, \dots p-1;
 \end{equation}
  this is a special case of a general well known result about endomorphism fields of complex abelian varieties: see, e.g., \cite[p. 84]{MoonenZarhin}.
 (See also Remark \ref{j0} below where another proof for jacobians is given.)
 We may view the collection $\{a_j\}$ as a nonnegative integer-valued function
  $$\mathbf{a}=\mathbf{a}_{X,\delta}$$
   on the finite cyclic group $G=(\bbZ/p\bbZ)^{*}$ of order $(p-1)$ where
 \begin{equation}
 \label{functionA}
\mathbf{a}_{X,\delta}(j\bmod p)= \mathbf{a}(j\bmod p)=a_j  \ (1 \le j \le p-1);  \quad  \sum_{h\in G}\mathbf{a}_{X,\delta}(h)=g.
\end{equation}
The group $G$ contains two distinguished elements, namely, the identity element $1\bmod p$ and the only element
$(-1)\bmod p = (p-1)\bmod p$ of order $2$. If $h$ is an element of $G$ then we write $-h$ for the product 
of $h$ by
$(-1)\bmod p$ in $G$.
If $h =j\bmod p$ then $-h=(p-j)\bmod p$. In light of \eqref{Hsymmetry},
\begin{equation}
\label{Gsymmetry}
\mathbf{a}_{X,\delta}(h)+\mathbf{a}_{X,\delta}(-h)=\frac{2g}{p-1} \ \forall  h \in G.
\end{equation}
For each $h=j \bmod p\in G$ we write 
$$\zeta_p^h:=\zeta_p^j,$$
which is a primitive $p$th root of unity that does {\sl not} depend on the choice of $j$.

\begin{defn}
Let $p$ be an odd prime and $g$ a positive integer such that $p-1$ divides $2g$.
Let $f: G \to \bbZ_{+}$ be a nonnegative integer-valued function.
\begin{itemize}
\item[(i)]
We say that $f$ is {\sl well rounded} of degree $g$ if 
$$f(h)+f(-h)=\frac{2g}{p-1} \ \forall  h \in G.$$
\item[(ii)]
We say that $f$ is {\sl  admissible } of degree $g$  if there exist a principally polarized $g$-dimensional complex abelian variety $(X,\lambda)$ with an 
automorphism $\delta\in \Aut(X,\lambda)$
of period $p$ 
such that  \eqref{cyclotomic} holds and
$$f=\mathbf{a}_{X,\delta}.$$
\item[(iii)]
We say that $f$ is {\sl strongly admissible} of degree $g$ if it is  admissible of degree $g$ and one may choose the corresponding $(X,\lambda,\delta)$ 
in such a way that $(X,\lambda)$
is the canonically polarized jacobian of a smooth irreducible connected projective curve of genus $g$.
\end{itemize}
\end{defn}

\begin{rem}
\label{round}
\begin{enumerate}
\item[(i)]
In light of \eqref{Gsymmetry}, our $\mathbf{a}=\mathbf{a}_{X,\delta}$ is well rounded. In other words, every  admissible function is well rounded.
\item[(ii)]
The number of well rounded functions (for given $g$ and $p$) is  obviously
$$\left(\frac{2g}{p-1}+1\right)^{(p-1)/2}.$$
\item[(iii)]
Let $f: G \to \bbZ_{+}$ be a well rounded function of degree $g$.  Then
$$\bar{f}:  G \to \bbZ_{+}, \ h \mapsto f(-h)=\frac{2g}{p-1}-f(h)$$
is also well rounded of degree $g$.
In addition, if $f$ is admissible (resp. strongly admissible) then
$\bar{f}$ is also  admissible (resp. strongly admissible). Namely, let
$(X,\lambda)$ be a principally polarized abelian variety, $\delta \in \Aut(X,\lambda)$ an automorphism of period $p$ that satisfies the $p$th cyclotomic equation \eqref{cyclotomic} in $\End(X)$ such that $f$ coincides with the corresponding multiplicity function $\mathbf{a}_{X,\delta}$. Then $\delta^{-1}$ is also an automorphism of $\Aut(X,\lambda)$ of period $p$ that satisfies the $p$th cyclotomic equation and such that
$\bar{f}=\mathbf{a}_{X,\delta^{-1}}$.
So, $\bar{f}$ is admissible. In addition, if $(X,\lambda)=(\mathcal{J}(\mathcal{C}),\Theta)$ is the canonically polarized jacobian of a curve  $\mathcal{C}$ then 
$\bar{f}=\mathbf{a}_{\mathcal{J}(\mathcal{C}),\delta^{-1}}$
is strongly admissible.
\end{enumerate}
\end{rem}

\begin{ex}
\label{p3weak}
Let $p=3$ and $E$ an elliptic curve over $\bbC$ with complex multiplication by $\bbZ[\zeta_3]$. We may take as $E$  the smooth projective model of $y^2=x^3-1$
where $\zeta_3$ acts on $E$ by an automorphism
$$\delta_E: (x,y) \mapsto (\zeta_3 x, y).$$
Clearly, $\delta_E$ satisfies the 3rd cyclotomic equation and respects the only principal polarization on $E$. 
We have
$$\Omega^1(E)=\bbC \cdot \frac{dx}{y}, \quad (\delta_E)_{\Omega}\frac{dx}{y}=\frac{ d(\zeta_3 x)}{y}=\zeta_3 \frac{dx}{y}.$$
This means that
$${\mathbf a}_{E,\delta_E}(1 \bmod 3)=1, \quad {\mathbf a}_{E,\delta_E}(2 \bmod 3)=0.$$
Let $g$ be a positive integer, and $f(1)$ and $f(2)$ are nonnegative integers,
whose sum is $g$. Let us put 
$$Y_1=E^{f(1)}, \quad Y_2=E^{f(2)},  \quad  Y=Y_1\times Y_2.$$
Let $\lambda_Y$ be the principal polarization on $Y$ that is the product of $g$ pull-backs of the principal polarization on $E$.
Let us consider the automorphism $\delta_Y$ of $Y$ that acts (diagonally) as $\delta_E$ on $Y_1=E^{f(1)}$ and as $\delta_E^2=\delta_E^{-1}$ on $Y_2=E^{f(2)}$.
Clearly, $\delta_Y$ satisfies the 3rd cyclotomic equation and respects $\lambda_Y$. It is also clear that
$$\mathbf{a}_{Y,\delta_Y}(1\bmod 3)=f(1), \quad \mathbf{a}_{Y,\delta_Y}(2\bmod 3)=f(2).$$
In other words, if $p=3$ then every well rounded function of degree $g$ is  admissible.
We will see (Section \ref{Peq3} below) that not every such function is strongly admissible.
\end{ex}

We will also need the function
\begin{equation}
\label{jG}
\mathbf{j}: G=(\bbZ/p\bbZ)^{*} \to \bbZ, \  (j\bmod p) \mapsto j \ (1 \le j \le p-1).
\end{equation}
Clearly,
\begin{equation}
\label{jmult}
\mathbf{j}(h_1 h_2)\equiv \mathbf{j}(h_1)\mathbf{j}(h_2) \bmod p \ \forall h_1,h_2\in G.
\end{equation}

Recall that if $f_1(h)$ and $f_2(h)$ are complex-valued functions on $G$ then its convolution is the function $f_1*f_2(h)$ on $G$ defined by
\begin{equation}
\label{conv}
f_1*f_2(h)=\frac{1}{p-1}\sum_{u\in G}f_1(u)f_2(u^{-1}h).
\end{equation}

 \begin{thm}
 \label{nonjacob}
Suppose that $(X,\lambda)$ is  the jacobian of a smooth projective irreducible
genus $g$ curve $\cV$ with canonical principal
 polarization. Then there exists 
 a nonnegative integer-valued function
 $$\mathbf{b}: G=(\bbZ/p\bbZ)^{*} \to \bbZ_{+}\subset \bbC$$ such that
 \begin{equation}
 \label{sumC}
 \sum_{h\in G}\mathbf{b}(h)=\frac{2g}{p-1}+2, \quad
 \sum_{h\in G}\mathbf{b}(h) \mathbf{j}(h^{-1}) \in p\bbZ,
 \end{equation}
 \begin{equation}
 \label{bConvj}
 \mathbf{a}(v):=\mathbf{a}_{X,\delta}(v)=\frac{(p-1)}{p} \cdot \mathbf{b}*\mathbf{j}(-v)-1 \ \forall v \in G.
 \end{equation} 
 \end{thm}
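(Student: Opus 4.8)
The plan is to reduce the statement to a concrete computation on the curve $\CV$ via the Torelli theorem, and then to read off $\mathbf{b}$ from the local behaviour of the induced automorphism of $\CV$ at its fixed points. First I would use Torelli's theorem to realize $\delta$ as an automorphism of $\CV$: since $(X,\lambda)$ is $\mathrm{Jac}(\CV)$ with its canonical polarization, every automorphism of $(X,\lambda)$ has the form $\pm\sigma_*$ for an automorphism $\sigma$ of $\CV$, and because $\delta$ has odd order $p$ the sign must be $+$, so $\delta=\sigma_*$ for some $\sigma\in\Aut(\CV)$ of order $p$. Under the canonical identifications $\Omega^1(X)=\H^0(\CV,\Omega^1_\CV)$ and $\H^1(\CV,\C)$, the action of $\delta$ is induced by $\sigma$. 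The hypothesis $\sum_{j=0}^{p-1}\delta^j=0$ says that $\delta$, hence $\sigma$, has no eigenvalue $1$ on $\H^1(\CV,\C)$; since the $\sigma$-invariants in $\H^0(\CV,\Omega^1_\CV)$ are precisely the pullbacks of the regular differentials on $\CV/\langle\sigma\rangle$, this forces the quotient to have genus $0$, i.e. $\CV/\langle\sigma\rangle\cong\P^1$.

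Next I would analyze the cyclic cover $\pi\colon\CV\to\CV/\langle\sigma\rangle\cong\P^1$ of prime degree $p$. Because $\langle\sigma\rangle$ has prime order, every point stabilizer is trivial or the whole group, so the ramification points are exactly the finitely many fixed points $P_1,\dots,P_r$ of $\sigma$, each totally ramified. At each $P_s$ I choose a local coordinate in which $\sigma$ acts by a root of unity; this assigns a rotation number $n_s\in\{1,\dots,p-1\}$ (the character by which $\sigma$ acts on the cotangent line $T^*_{P_s}\CV$). Riemann--Hurwitz with quotient genus $0$ gives $2g-2=-2p+r(p-1)$, hence $r=\tfrac{2g}{p-1}+2$.

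Then I would invoke the Chevalley--Weil formula (equivalently the holomorphic Lefschetz/Eichler trace formula applied to the powers $\sigma^k$, $1\le k\le p-1$, followed by Fourier inversion over $G=(\Z/p\Z)^{*}$) to compute the eigenvalue multiplicities of $\sigma$ on $\H^0(\CV,\Omega^1_\CV)$. The elementary evaluation $\sum_{k=1}^{p-1}\zeta_p^{bk}/(1-\zeta_p^k)=\mathbf{j}(b\bmod p)-\tfrac{p+1}{2}$ for $b\not\equiv 0$ then yields, for each $v\in G$, a multiplicity of the shape $-1+\tfrac1p\sum_{s=1}^r\mathbf{j}(-v\,n_s^{-1}\bmod p)$, where I have used $\mathbf{j}(h)+\mathbf{j}(-h)=p$ to absorb the sign conventions. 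Setting $\mathbf{b}(u)=\#\{\,s:n_s\equiv u\bmod p\,\}$, a manifestly nonnegative integer-valued function on $G$, the sum over fixed points becomes $\sum_{u\in G}\mathbf{b}(u)\,\mathbf{j}(-u^{-1}v)$, and unwinding the convolution \eqref{conv} gives exactly \eqref{bConvj}. Condition \eqref{sumC} is then automatic: summing \eqref{bConvj} over all $v\in G$ and using $\sum_{v}\mathbf{j}(-u^{-1}v)=\tfrac{p(p-1)}{2}$ forces $\sum_{h}\mathbf{b}(h)=\tfrac{2g}{p-1}+2$ (equivalently, $\sum_u\mathbf{b}(u)=r$ by Riemann--Hurwitz).

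The main obstacle I expect is bookkeeping rather than anything conceptual: pinning down the Chevalley--Weil indexing so that the multiplicity attached to the embedding $\zeta_p\mapsto\zeta_p^{v}$ matches the rotation-number data, keeping straight the dual/transpose relating $\sigma_*=\delta$ on $X$ to the action on differentials, and the tangent-versus-cotangent choice at the fixed points. Each of these amounts to replacing an index by its negative or inverse in $G$, and all are reconciled by the single identity $\mathbf{j}(h)+\mathbf{j}(-h)=p$; this is precisely why the clean statement is phrased with $\mathbf{j}(-v)$ rather than $\mathbf{j}(v)$. As a cross-check, the admissibility relation \eqref{Gsymmetry}, $\mathbf{a}(v)+\mathbf{a}(-v)=\tfrac{2g}{p-1}$, follows directly from the formula via $\mathbf{j}(x)+\mathbf{j}(-x)=p$ and $r=\tfrac{2g}{p-1}+2$. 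An alternative to Chevalley--Weil would be to present $\CV$ explicitly as a superelliptic curve $y^p=\prod_i(x-b_i)^{c_i}$ over $\P^1$ and compute the multiplicities from the monomial basis $x^{a}y^{-b}\,dx$ of $\H^0(\CV,\Omega^1_\CV)$, which makes the $n_s$ explicit in terms of the $c_i$.
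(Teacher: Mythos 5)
Your proposal is correct and follows essentially the same route as the paper: Weil's form of the Torelli theorem to realize $\delta$ by an order-$p$ automorphism of $\CV$, the genus-zero quotient and Riemann--Hurwitz to count the $\frac{2g}{p-1}+2$ fixed points, and the holomorphic Lefschetz fixed point formula to convert the rotation numbers at those fixed points into the convolution identity \eqref{bConvj}, with $\mathbf{b}$ defined exactly as the paper's counting function of fixed points according to their local character. The only internal variations are minor: you prove the quotient has genus $0$ via invariant differentials rather than the paper's Albanese-functoriality argument with the isogeny $1-\delta$, and you package the trace computation as Chevalley--Weil (Lefschetz applied to all powers plus Fourier inversion over $G$), where the paper applies Lefschetz once to $\phi$ and instead uses that $\{\zeta_p^v \mid v\in G\}$ is a $\Q$-basis of $\Q(\zeta_p)$.
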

 
 \begin{ex}
 \label{ellipticC}
  Let us  describe explicitly the case $g=1$. Then $X$ is an elliptic curve. It follows from \eqref{divide} that
 $(p-1)$ divides $2$ and therefore
 $$p=3, \ p-1=2, \ G=(\bbZ/3\bbZ)^{*}=\{\bar{1}=1\bmod 3, \ \ \bar{2}=2 \bmod 3=-\bar{1}\}.$$
 Then either
 \begin{equation}
 \label{a11}
 \mathbf{a}_{X,\delta}(\bar{1})=1, \quad \mathbf{a}_{X,\delta}(\bar{2})=0
 \end{equation}
 or
 \begin{equation}
 \label{a10}
 \mathbf{a}_{X,\delta}(\bar{1})=0, \quad \mathbf{a}_{X,\delta}(\bar{2})=1.
 \end{equation}
  
  In the case \eqref{a11}, the desired $\mathbf{b}$ is given by the formulas
  $$\mathbf{b}(\bar{1})=0, \quad \mathbf{b}(\bar{2})=3.$$
 In the case \eqref{a10}, the desired $\mathbf{b}$ is given by the formulas
 $$\mathbf{b}(\bar{1})=3, \quad \mathbf{b}(\bar{2})=0.$$
 \end{ex}

 \begin{proof}[Proof of Theorem \ref{nonjacob}]
In light of Example \ref{ellipticC}, we may assume 
  that $g>1$. We may assume that
$(X,\lambda)= (\cJ(\cV),\Theta)$ where $\cV$ is an
irreducible smooth projective genus $g$ curve, $\cJ(\cV)$ its jacobian
with canonical principal polarization $\Theta$, and
 $$\delta \in \Aut(\cJ(\cV), \Theta)$$ satisfies the $p$th cyclotomic equation 
 $$\sum_{i=0}^{p-1} \delta^i=0\in \End(\cJ(\cV)).$$
 This means that
 $\delta^p=1 \in \mathrm{End}(\cJ(\cV))$ 
and  the (sub)set $\cJ(\cV)^{\delta}$ of $\delta$-invariant points of  $\cJ$ is finite. The latter implies that the homomorphism
$1_{\cJ(\cV)}-\delta: \cJ(\cV) \to \cJ(\cV)$ has finite kernel,
hence, is {\sl surjective}.

Identifying the curve $\cV$ with its image $\mathrm{alb}_{P_0}(\cV)\subset \cJ(\cV)$ 
(i.e.,  
$\mathrm{alb}(P_0)$ is the zero $0$ of the group law on $\cJ(\cV)$), 
we have
$$0=\mathrm{alb}_{P_0}(P_0)\in C \subset \cJ(\cV).$$
If $v \in  X=\cJ(\cV)$  then we write $T_v$ for the translation map
$$T_v: X \to X, \quad x \mapsto  x+v.$$

By Torelli Theorem (Weil's variant, see \cite[p. 35, Hauptsatz]{Weil1} and \cite{Weil2}) applied to 
$$\delta^{(p+1)/2}\in \Aut(\cJ(C),\Theta),$$
$\exists  \ \phi_{1/2} \in \Aut(\cV)$, $\epsilon=\pm 1$ and $z \in \cJ(\cV)$ such that
$$\delta^{(p+1)/2}(P)=\epsilon \ \phi_{1/2}(P)+z \ \ \forall P \in C\subset \cJ(\cV).$$
 This implies that
$$
\delta(P)=\delta^{p+1}(P)=\left(\delta^{(p+1)/2}\right)^2(P)=\delta^{(p+1)/2}(\epsilon \ \phi_{1/2}(P)+z )=$$
$$\epsilon \ \delta^{(p+1)/2}(\phi_{1/2}(P))+\delta^{(p+1)/2}(z)=
\epsilon \ (\epsilon \phi_{1/2}^2(P)+z)+\delta^{(p+1)/2}(z)=$$
$$\epsilon^2 \phi_{1/2}^2(P)+\left(\epsilon \ z+\epsilon \delta^{(p+1)/2} z\right)= \phi_{1/2}^2(P)+\left(\epsilon \ z+\epsilon \delta^{(p+1)/2} z\right).
$$ 
It follows that
$\delta(P)=\phi_{1/2}^2(P)-w$ with $w=-(\epsilon \ z+\epsilon \ \delta^{(p+1)/2} z)$. 
Let us put
$$\phi:= \phi_{1/2}^2\in \Aut(C).$$
Then
\begin{equation}
\label{WeilTw}
\phi(P)=\delta(P)+w=T_w \circ \delta(P) \quad \forall P \in \cV \subset \cJ(\cV),
\end{equation}
i.e., the following diagram is commutative. (Here the horisontal arrows are the inclusion map $\mathrm{alb}_{P_0}$.)

$$\ \
\begin{CD}
C @>\subset>>\cJ(C)\\
@V \phi VV     @VV T_w \circ \delta V \\
C @>\subset>>\cJ(C)
\end{CD}.$$

 Choose $u \in \cJ(C)$ such that $v-\delta(v)=(1-\delta)v=w$.  Then
$$T_w \circ \delta =T_v \circ \delta \circ T_v^{-1},$$ i.e., 
$T_w \circ \delta$ and $\delta$ are conjugate in  the group $\tilde{\Aut}(\cJ(\cV))$ of biregular automorphisms of the
 {\sl algebraic variety} $\cJ(\cV)$. In particular,
 $T_w \circ \delta$ is a periodic automorphism of order $p$; hence,
 $\phi^p$ is the {\sl identity automorphism} of $\cV$.
On the other hand, $\phi$ itself is {\sl not} the identity map (see below).

It is well known that 
 the map $\Psi: \Omega^1(\cJ(\cV)) \to \Omega^1(\cV)$ induced by the  inclusion map $\mathrm{alb}_{P_0}:\cV \subset \cJ(\cV)$ 
 is an isomorphism of  $g$-dimensional complex vector spaces. 
 On the other hand,
the linear map  $(T_u)_{\Omega}: \Omega^1(\cJ(\cV)) \to \Omega^1(\cJ(\cV))$
 induced by any translation $T_u: \cJ(\cV)) \to \cJ(\cV))$ is the {\sl identity map} (because all global regular 1-forms
 on an abelian variety are translation-invariant).  Hence,
 the linear maps 
 $\delta_{\Omega}: \Omega^1(\cJ(\cV)) \to \Omega^1(\cJ(\cV))$ and
  $ (T_w\circ \delta)_{\Omega}: \Omega^1(\cJ(\cV)) \to \Omega^1(\cJ(\cV))$  induced by $\delta$ and $T_w\circ \delta$ respectively 
  do coincide. This implies that the following diagram is commutative.
  
  $$\ \  \begin{CD}
\Omega^1(\cJ(\cV))@>\Psi>>\Omega^1(\cV)\\
@V\delta_{\Omega}=(T_w\circ \delta)_{\Omega} VV     @VV\phi_{\Omega} V \\
\Omega^1(\cJ(\cV) )@>\Psi>>\Omega^1(\cV)
\end{CD}.$$
  
  It follows that
  $$\phi_{\Omega} =\Psi \circ \delta_{\Omega}\circ \Psi^{-1}.$$
   In particular,  the linear operators $\phi_{\Omega}$  and  $\delta_{\Omega}$ have the the same spectrum, the same trace, and $\phi_{\Omega}$ is an automorphism of order $p$.
 This implies that
 $\phi$ is {\sl not} the identity map, hence, has order $p$.
The action of $\phi$  on $\mathcal{C}$ gives rise to
the group embedding  
$$\mu_p \hookrightarrow \Aut(\mathcal{C}), \quad \zeta_p \mapsto \phi.$$
Let $P\in \cV$ be a fixed point of $\phi$. Then $\phi$ induces an automorphism of the
corresponding (one-dimensional) tangent space $\mathcal{T}_P(\cV)$ that is multiplication by a complex
number $\epsilon_P$ that is called the {\sl index} of $P$. 
Clearly, $\epsilon_P$ is a $p$th root of unity.


\begin{lem}
\label{nondegenerate}
Every fixed point $P$ of $\phi$ is nondegenerate, i.e., $\epsilon_P\ne 1$.
\end{lem}

\begin{proof}[Proof of Lemma \ref{nondegenerate}]    The result is  well-known. but I failed to find a proper reference (however, see \cite[Lemma 1.2]{ZarhinManinFest}) where the case $p=3$
was proven.)

 Suppose that $\epsilon_P=1$.
 Let $\OC_P$ be the local ring of $\cV$ at $P$ and $\fm_P$ its maximal ideal. We write $\phi_{*}$ for
 the automorphism of $\OC_P$ induced by $\phi$. Clearly,    $\phi_{*}^p$    is the identity
 map. Since $\phi$ is {\sl not} the identity map, there
 are no $\phi_{*}$-invariant local parameters at $P$. Clearly,
  $\phi_{*}(\fm_P)=\fm_P,    \phi_{*}(\fm_P^2)=\fm_P^2$.
  Since      $T_P(\cV)$ is the dual of   $\fm_P/\fm_P^2$ and $c_p=1$, we conclude that
   $\phi_{*}$ induces the identity map on  $\fm_P/\fm_P^2$.
This implies that if $t\in \fm_P$ is a local parameter at $t$ (i.e., its image $\bar{t}$ in
 $\fm_P/\fm_P^2$   is {\sl not} zero) then 
 $$t^{\prime}:=\sum_{k=0}^{p-1}\phi_{*}^k(t) \in \fm_P\subset \OC_P$$
is
 $\phi_{*}$-invariant and its image in  $\fm_P/\fm_P^2$ equals $p\bar{t}\ne 0$. This implies
 that $t^{\prime}$ is a  $\phi_{*}$-invariant local parameter at $P$.      Contradiction.

 \end{proof}

 \begin{cor}
The quotient $\mathcal{D}:=\cV/\mu_p$ is a smooth projective irreducible curve. The map $\cV\to D$ has degree $p$,
 its ramification points are exactly the images of  fixed points of $\phi$
 and all the ramification indices are $p$.
 \end{cor}

 \begin{lem}
 \label{line}
$\mathcal{D}$ is biregularly isomorphic to the projective line.
 \end{lem}

 \begin{proof}[Proof of Lemma \ref{line}]
 Suppose that the genus of $\mathcal{D}$ is positive. Then there is a nonzero $\omega_0 \in \Omega^1(\mathcal{D})$.
 Its inverse image $\omega$ in  $\Omega^1(\cV)$ is a nonzero $\phi_{\Omega}$-invariant regular $1$-form.
 Hence, the spectrum of  $\phi_{\Omega}$ contains $1$. 
 We have seen  that the linear operators $\phi_{\Omega}$  and  $\delta_{\Omega}$ have the same spectrum.
 Hence, the spectrum of $\delta_{\Omega}$ contains $1$, which is not the case.
 The obtained contradiction proves that the genus of $\mathcal{D}$ is $0$.
 \end{proof}

 \begin{cor}
 \label{hur}
The number $F(\phi)$ of fixed points of $\phi$ is $\frac{2g}{p-1}+2$.
 \end{cor}

\begin{proof}[Proof of Corollary \ref{hur}]
Applying the Riemann-Hurwitz formula to $\cV\to \mathcal{D}$, we get
$$2g-2=p\cdot (-2)+ (p-1)\cdot F(\phi).$$
\end{proof}

\begin{lem}
\label{trace}
Let $\tau$ be the trace of $\phi_{\Omega}:\Omega^1(\cV) \to \Omega^1(\cV)$.
Then
$$\tau=\sum_{j=1}^{p-1}a_j \zeta_p^j=\sum_{h\in G}\mathbf{a}(h)\zeta_p^h.$$
\end{lem}

\begin{proof}[Proof of Lemma \ref{trace}]
We have seen  that the linear operators $\phi_{\Omega}$  and  $\delta_{\Omega}$ have the same trace.
Now the very definition of $a_j$'s implies
that the trace of $\delta_{\Omega}$ equals $\sum_{j=1}^{p-1}a_j \zeta_p^j$.
Hence, $\tau=\sum_{j=1}^{p-1}a_j \zeta_p^j$.
\end{proof}

\begin{lem}
\label{maincomp}
Let $\zeta\in \bbC$ be a primitive $p$th root of unity. Then
\begin{equation}
\label{mainF}
\frac{1}{1-\zeta}=-\frac{\sum_{j=1}^{p-1} j \zeta^j}{p}=-\frac{\sum_{h\in G}\mathbf{j}(h)\zeta^h}{p}.
\end{equation}
\end{lem}

\begin{proof}[Proof of Lemma \ref{maincomp}]   We have
$$(1-\zeta)\left(\sum_{j=1}^{p-1} j \zeta^j\right)=\sum_{j=1}^{p-1}\left( j \zeta^j- j \zeta^{j+1}\right)=
\left(\sum_{j=1}^{p-1} \zeta^j\right)-(p-1)\zeta^p=(-1)-(p-1)=-p.$$
\end{proof}

{\bf End of proof of Theorem \ref{nonjacob}}. Let $B$ be the set of fixed points of $\phi$.
 We know that $\#(B)=\frac{2g}{p-1}+2$. By the holomorphic Lefschetz fixed point formula \cite[Th. 2]{AT},
\cite[Ch. 3, Sect. 4]{GH} (see also \cite[Sect. 12.2 and
12.5]{Milnor})
 applied to $\phi$,
 \begin{equation}
 \label{Lefschetz}
 1-\bar{\tau}=\sum_{P\in B}\frac{1}{1-\epsilon_P}
 \end{equation}
 where $\bar{\tau}$ is the complex-conjugate of $\tau$. Recall that every $\epsilon_P$ is a primitive
  $p$th root of unity. Now Theorem \ref{nonjacob} follows readily from the following assertion.
  \end{proof}
  
  \begin{prop}
  \label{hLfp}
  Let us define for each $h \in G$ the nonnegative integer $\mathbf{b}(h)$ as the number of fixed points
  $P\in B\subset \cV(\bbC)$ such that $\epsilon_P=\zeta_p^h$. Then
  \begin{equation}
  \label{sumBh}
  \sum_{h\in G}\mathbf{b}(h)=F(\phi)=\frac{2g}{p-1}+2.
  \end{equation}
  and
  \begin{equation}
  \label{bTOa}
  \mathbf{a}(v)=\frac{(p-1)}{p}\cdot \mathbf{b}*\mathbf{j}(-v) -1 \ \forall  v\in G.
  \end{equation}
  \end{prop}
  In particular,
  $$\bbZ \ni  1+\mathbf{a}(-1 \bmod p)=\frac{1}{p} \left(\sum_{h \in G} \mathbf{b}(h) \mathbf{j}(h^{-1})\right).$$

  \begin{proof}[Proof of Proposition \ref{hLfp}]
  The equality \eqref{sumBh} is obvious. Let us prove \eqref{bTOa}.
  Combining \eqref{Lefschetz} with Lemma \ref{maincomp} (applied to $\zeta=\zeta_p^h$) and  Lemma \ref{trace}, we get
  $$1-\sum_{h\in G}\mathbf{a}(h)\zeta_p^{-h}=\sum_{u\in G}\mathbf{b}(u)\frac{1}{1-\zeta_p^u}=
  \frac{-1}{p}\left(\sum_{u\in G}\mathbf{b}(u)\left(\sum_{h\in G}\mathbf{j}(h)\zeta_p^{hu}\right)\right)=$$
  $$ \frac{-1}{p}\sum_{v\in G}\left(\sum_{u\in G}\mathbf{b}(u)\mathbf{j}(u^{-1}v)\right)\zeta_p^v=
   \frac{-1}{p}\sum_{v\in G} (p-1) \mathbf{b}*\mathbf{j}(v)\zeta_p^v=
   \frac{-(p-1)}{p}\sum_{v\in G}  \mathbf{b}*\mathbf{j}(v)\zeta_p^v
   $$
   (here we use a substitution $v=hu$).
   Taking into account that
   $$0=1+\sum_{j=1}^{p-1}\zeta_p^j=1+\sum_{v\in G}\zeta_p^v,$$
   we obtain
   $$-\left(\sum_{v\in G}\zeta_p^v\right)-\sum_{h\in G}\mathbf{a}(h)\zeta_p^{-h}= -\frac{(p-1)}{p}\sum_{v\in G}\mathbf{b}*\mathbf{j}(v)\zeta_p^v.$$
   Taking into account that the $(p-1)$-element set
   $$\{\zeta_p^j\mid 1\le j\le p-1\}=\{\zeta_p^v\mid v\in G\}$$
   is a basis of the $\bbQ$-vector space $\bbQ(\zeta_p)$, we get
  $1+\mathbf{a}(-v)=(p-1)\mathbf{b}*\mathbf{j}(v)/p$,  i.e.,
  \begin{equation}
  \label{finab}
  \mathbf{a}(v)=\frac{(p-1)}{p}\cdot \mathbf{b}*\mathbf{j}(-v) -1 \ \forall  v\in G.
  \end{equation}
       \end{proof}
       \begin{rem}
       \label{j0}
       Let us consider the function
       \begin{equation}
       \label{j00}
       \mathbf{j}_0:=\mathbf{j}-\frac{p}{2}: G=(\bbZ/p\bbZ)^{*} \to \bbQ,  \ (j\bmod p) \mapsto j-\frac{p}{2} \ \ \text{ where } \ j=1, \dots, p-1.
       \end{equation}
       Then
       \begin{equation}
       \label{oddJ0}
       \mathbf{j}_0(-u)=-\mathbf{j}_0(u) \ \forall u\in G.
       \end{equation}
       If $\mathbf{a},\mathbf{b}: G \to \bbZ_{+}$ are functions related by \eqref{finab} then
       $$\mathbf{b}*\mathbf{j}(v)=\mathbf{b}*\mathbf{j}_0(v)+\frac{p}{2(p-1)} \sum_{h\in G}\mathbf{b}(h)=
       \mathbf{b}*\mathbf{j}_0(v)+\frac{p}{2(p-1)}\left(\frac{2g}{p-1}+2\right).$$
       This implies that
       $$\frac{(p-1)}{p}\cdot \mathbf{b}*\mathbf{j}(v)=\frac{(p-1)}{p}\cdot \mathbf{b}*\mathbf{j}_0(v)+\frac{g}{p-1}+1$$
       and therefore
       \begin{equation}
       \label{a0}
       \mathbf{a}(v)=\frac{(p-1)}{p}\cdot \mathbf{b}*\mathbf{j}_0(-v) +\frac{g}{p-1} \ \forall  v\in G.
       \end{equation}
       On the other hand, it follows from \eqref{oddJ0} that the convolution  $\mathbf{b}*\mathbf{j}_0$ also satisfies 
       $$\mathbf{b}*\mathbf{j}_0(-v)=\mathbf{b}*\mathbf{j}_0(v) \  \ \forall  v\in G.$$
       This implies that
        $$\mathbf{a}(v)+ \mathbf{a}(-v) = \frac{(p-1)}{p}\cdot \mathbf{b}*\mathbf{j}_0(-v) +\frac{g}{p-1}  +\frac{(p-1)}{p}\cdot \mathbf{b}*\mathbf{j}_0(v) +\frac{g}{p-1} 
       =\frac{2g}{p-1} 
       \ \forall  v\in G.$$
       This implies that
       \begin{equation}
       \label{oddA}
       \mathbf{a}(v)+ \mathbf{a}(-v) =\frac{2g}{p-1}.
       \end{equation}
       (Actually, we already know it for admissible $\mathbf{a}$, see \eqref{Hsymmetry}.)
       It follows from \eqref{oddA} that
       \begin{equation}
       \label{oddAA}
        \mathbf{a}(v)=\frac{2g}{p-1}-\frac{(p-1)}{p}\cdot \mathbf{b}*\mathbf{j}(v) +1 \ \forall  v\in G.
       \end{equation}
       \end{rem}
       
       \begin{cor}
       \label{uniqueOdd}
       We keep the notation and assumptions of Theorem \ref{nonjacob}. Let
       $\mathbf{b}^{\prime}: G \to \bbC$ be a complex-valued function on $G$.
       \begin{itemize}
       \item[(a)]
       The following two conditions are equivalent.
        \begin{enumerate}
        \item[(a1)]
       $\mathbf{a}(v)=\frac{(p-1)}{p} \cdot \mathbf{b}^{\prime}*\mathbf{j}(-v)-1
       \quad \forall v\in G$.
       \item[(a2)]
       The odd parts of functions $\mathbf{b}$ and $\mathbf{b}^{\prime}$   coincide, i.e.,
       \begin{equation}
       \label{oddBprimeB}
       \mathbf{b}^{\prime}(v)-\mathbf{b}^{\prime}(-v)=\mathbf{b}(v)-\mathbf{b}(-v) \quad \forall v\in G;
       \end{equation}
       in addition,
       \begin{equation}
 \label{sumCprime}
\sum_{h\in G}\mathbf{b}^{\prime}(h)= \sum_{h\in G}\mathbf{b}(h)=\frac{2g}{p-1}+2.
 \end{equation}
 \end{enumerate}
 \item[(b)]
       If $p=3$ and  condition (a1)  holds then
       $$\mathbf{b}^{\prime}(v)=\mathbf{b}(v) \ \forall v \in G.$$
       \end{itemize}
       \end{cor}
       
       \begin{proof}
       If $f: G \to \bbC$ is a complex-valued function on $G$ and 
       $\chi: G \to \bbC^{*}$ is a character (group homomorphism) then we write
       $$c_{\chi}(f)=\frac{1}{p-1}\sum_{h\in G}f(h)\bar{\chi}(h)$$
       for the corresponding {\sl Fourier coefficient} of $f$.  For example, if $\chi_0 \equiv 1$ is the  {\sl trivial character} of $G$ then
       $$c_{\chi_0}(f)=\frac{1}{p-1}\sum_{h\in G}f(h).$$
       In particular,
       $$c_{\chi_0}(\mathbf{j})=\frac{1}{p-1}\sum_{j=1}^{p-1} j=\frac{p}{2} \ne 0.$$
       We have
       \begin{equation}
       \label{Forier}
       f(v)=\sum_{\chi \in \hat{G}} c_{\chi}(f)\chi(v) \ \text{ where } \hat{G}=\Hom(G,\bbC^{*}).
       \end{equation}
     
      Let us consider the function
         $$d: G \to \bbC, \ d(v)=\mathbf{b}^{\prime}(v)-\mathbf{b}(v).$$
         Suppose that (a1) holds. 
         We need to check  that (a2) holds, i.e., 
          $$\sum_{h \in G} d(h)=0, \quad d(v)= d(-v) \ \forall v \in G,$$
          which means that 
          $$c_{\chi_0}(d)=0$$
          and 
          for all {\sl odd characters} $\chi$ (i.e., characters $\chi$ of $G$ with
          $$ \chi(-1\bmod p)=-1)$$
          the corresponding {\sl Fourier coefficient}
          $$c_{\chi}(d)=0.$$
         It follows from \eqref{bConvj} that
        $d*\mathbf{j}(-v)=0$ for all $v \in G$, i.e.,
        $$d*\mathbf{j}(v)=0 \ \forall v \in G.$$
        This implies that
        $$0=c_{\chi}(d*\mathbf{j})=c_{\chi}(d)\cdot c_{\chi}(\mathbf{j}) \quad \forall \chi \in \hat{G}.$$
        However, we know that $c_{\chi_0}(\mathbf{j})\ne 0$. On the other hand,
        $c_{\chi}(\mathbf{j}) \ne 0$ for all {\sl odd} $\chi$: 
         it follows from \cite[Chap. 16, Theorem 2]{IR} combined with \cite[Ch. 9,  p. 288, Th. 9.9]{MW}.
         This implies that $c_{\chi_0}(d)=0$ and
        $c_{\chi}(d)=0$ for all {\sl odd} $\chi$. 
        This proves that (a1) implies (a2).
        
        Assume now that (a2) holds. This means that $d(v)$ is an {\sl even} function, i.e.,
        $$d(-v)=d(v) \quad \forall v \in G,$$
        and
        $$\sum_{v \in G}d(v) =0.$$
        We need to prove that (a1) holds. 
        Let us prove first that
        \begin{equation}
       \label{a0P}
       \mathbf{a}(v)=\frac{(p-1)}{p}\cdot \mathbf{b}^{\prime}*\mathbf{j}_0(-v) +\frac{g}{p-1} \ \forall  v\in G.
       \end{equation}
        
         In light of \eqref{a0},  in order to prove \eqref{a0P},  it suffices to check that
        $$d*\mathbf{j}_0(-v) =0 \quad \forall v \in G,$$
        i.e.,
        \begin{equation}
        \label{fj0}
       D_v:= \sum_{h \in G} d(h)\cdot \mathbf{j}_0(h^{-1}v) =0 \quad \forall v \in G.
        \end{equation}
        In order to prove \eqref{fj0}, recall that $\mathbf{j}_0$ is {\sl odd} and $d$ is {\sl even}.
        This implies that
        $$D_v= \sum_{h \in G} d(h)\cdot \mathbf{j}_0(h^{-1}v)= 
        \sum_{h \in G} d(-h)\cdot \mathbf{j}_0((-h)^{-1}v)=$$
        $$\sum_{h \in G} d(h)\cdot \mathbf{j}_0(-h^{-1}v)=
        \sum_{h \in G} d(h)\cdot \left(-\mathbf{j}_0(h^{-1}v)\right)=
        -\sum_{h \in G} d(h)\cdot \mathbf{j}_0(h^{-1}v)=-D_v.$$
        It follows that 
        $$\sum_{h \in G} d(h)\cdot \mathbf{j}_0(h^{-1}v)=D_v=0,$$
         which proves \eqref{a0P}.
        
        Now taking into account that $\mathbf{j}_0=\mathbf{j}-p/2$, we get from  \eqref{a0P} that
        $$\mathbf{a}(v)=\frac{1}{p}\sum_{h \in G} \mathbf{b}^{\prime}(h)\cdot \Big(\mathbf{j}(h^{-1}(-v))-p/2\Big)=$$
        $$\left(\frac{1}{p}\sum_{h \in G} \mathbf{b}^{\prime}(h)\cdot \mathbf{j}(h^{-1}(-v)\right)
        -\frac{1}{2}\left(\sum_{h \in G} \mathbf{b}^{\prime}(h)\right)+\frac{g}{p-1}=$$
        $$\frac{(p-1)}{p}\cdot 
        \mathbf{b}^{\prime}*\mathbf{j}(-v)-\frac{1}{2}\left(\frac{2g}{p-1}+2\right)+\frac{g}{p-1}=
         \mathbf{b}^{\prime}*\mathbf{j}(-v)-1.$$
        So, $\mathbf{a}(v)=\mathbf{b}^{\prime}*\mathbf{j}(-v)-1$, i.e.,
        (a1) holds.
    This ends the proof of (a).
        
        Now let $p=3$. Then $2+2g/(p-1)=g+2$ and $G=\{\bar{1}=1\bmod 3,-\bar{1}\}$.  We already know that
        $\mathbf{b}^{\prime}(\bar{1})-\mathbf{b}^{\prime}(-\bar{1})=\mathbf{b}(\bar{1})-\mathbf{b}(-\bar{1}),$
        $$\mathbf{b}^{\prime}(\bar{1})+\mathbf{b}^{\prime}(-\bar{1})= g+2=\mathbf{b}(\bar{1})+\mathbf{b}(-\bar{1}).$$
        This implies that
        $\mathbf{b}^{\prime}(\bar{1})=\mathbf{b}(\bar{1}), \quad \mathbf{b}^{\prime}(-\bar{1})=\mathbf{b}(-\bar{1}),$
        which proves (b).
       \end{proof}

\begin{rem}
\label{character}
If $v \in G$ then the positive integer $k_v:=\mathbf{j}(h)$ does {\sl not} divide $p$  and
$\mathbf{j}(vh)-k_v \mathbf{j}(h)$ is {\sl divisible} by $p$ for all $h \in G$.
Indeed, by definition of $\mathbf{j}$,
$$v=k_v \bmod p, \quad h=\mathbf{j}(h) \bmod p \in (\bbZ/p\bbZ)^{*}=G.$$
This implies that in $(\bbZ/p\bbZ)^{*}$  we have
$$\left(k_v \mathbf{j}(h)\right) \bmod p=\left(k_v \bmod p\right) \left(\mathbf{j}(h) \bmod p\right)=vh=\mathbf{j}(vh) \bmod p.$$
\end{rem}

\begin{cor}
\label{cPdiv}
Let $c: G \to \bbZ$ be an integer-valued function.  Then the following conditions are equivalent.

\begin{itemize}
\item[(i)]
 $(p-1)\cdot c*\mathbf{j}(1\bmod p)=\sum_{h\in G}c(h)\mathbf{j}(h^{-1})\in p\bbZ$.
\item[(ii)]
 $(p-1)\cdot c*\mathbf{j}(v)=\sum_{h\in G}c(h)\mathbf{j}(h^{-1}v)\in p\bbZ \ \forall v\in G.$
 \end{itemize}
\end{cor}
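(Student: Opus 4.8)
The plan is to reduce everything to the multiplicative congruence recorded in Remark \ref{character}. Write $S_v := \sum_{h\in G}\mathbf{c}(h)\,\mathbf{j}(vh^{-1})$ for $v\in G$, so that condition (i) asserts $S_{1\bmod p}\in p\Z$ while condition (ii) asserts $S_v\in p\Z$ for every $v\in G$. Since $1\bmod p\in G$, condition (i) is literally the instance $v=1\bmod p$ of condition (ii); thus the genuine content of the corollary is that one single instance $S_v\in p\Z$ already forces all of them. Working directly with the integer sums $S_v$ sidesteps the normalizing factor $1/(p-1)$ in the convolution, which is harmless here since $p\nmid(p-1)$.

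First I would fix $v\in G$ and apply Remark \ref{character}, which furnishes an integer $k_v$ with $p\nmid k_v$ and $\mathbf{j}(vg)\equiv k_v\,\mathbf{j}(g)\pmod p$ for all $g\in G$. Taking $g=h^{-1}$ gives $\mathbf{j}(vh^{-1})\equiv k_v\,\mathbf{j}(h^{-1})\pmod p$ for every $h\in G$.

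Next I would multiply by $\mathbf{c}(h)$ and sum over $h\in G$, obtaining
$$S_v=\sum_{h\in G}\mathbf{c}(h)\,\mathbf{j}(vh^{-1})\equiv k_v\sum_{h\in G}\mathbf{c}(h)\,\mathbf{j}(h^{-1})=k_v\,S_{1\bmod p}\pmod p.$$
Because $k_v$ is invertible modulo $p$, this congruence shows that $S_v\in p\Z$ if and only if $S_{1\bmod p}\in p\Z$. Letting $v$ range over $G$, I conclude that condition (ii) holds precisely when condition (i) does, which is the desired equivalence.

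I do not anticipate any real obstacle: the argument is essentially a one-line consequence of the homomorphism property of the reduction map $\gamma$ from Remark \ref{character}. The only points requiring care are the group-theoretic bookkeeping in $G=(\Z/p\Z)^{*}$ (in particular that $vh^{-1}$ and $h^{-1}$ differ by the left-translation $v$, so the scalar $k_v$ depends only on $v$ and pulls out of the sum), and the observation that $k_v$ being a \emph{unit} modulo $p$ — not merely an integer — is exactly what upgrades the trivial implication (ii) $\Rightarrow$ (i) to a full equivalence.
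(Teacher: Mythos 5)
Your proof is correct and takes essentially the same route as the paper: both invoke Remark \ref{character} (applied to $h^{-1}$) to get the congruence $\sum_{h\in G}\mathbf{c}(h)\mathbf{j}(vh^{-1})\equiv k_v\sum_{h\in G}\mathbf{c}(h)\mathbf{j}(h^{-1})\pmod p$ for each $v\in G$, and conclude the equivalence from there. The only cosmetic difference is that you make explicit that (ii)$\Rightarrow$(i) is just the instance $v=1\bmod p$ --- which in fact renders the invertibility of $k_v$ superfluous, since the nontrivial direction (i)$\Rightarrow$(ii) needs only the congruence itself.
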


\begin{proof}
Clearly, (ii) implies (i).  Suppose that (i) holds, i.e.,
$$\sum_{h\in G}c(h)\mathbf{j}(h^{-1})\in p\bbZ.$$
 In order to prove that (ii) holds, we need to check that
$$ \sum_{h\in G}c(h)\mathbf{j}(h^{-1}v)\in p\bbZ \ \forall v\in G.$$
Notice that in light of Remark \ref{character} (applied to $h^{-1}$), if $v \in G$ then there exists $k_v\in \bbZ$
such that $\mathbf{j}(v h^{-1})-k_v \mathbf{j}(h^{-1})$ is {\sl divisible} by $p$ for all $h \in G$. In other words,
$\mathbf{j}(v h^{-1}) \equiv k_v \mathbf{j}(h) \bmod p$. This implies that $\forall v \in G$
$$\sum_{h\in G}c(h)\mathbf{j}(h^{-1}v)=
\sum_{h\in G}c(h)\mathbf{j}(v h^{-1})  \equiv k_v \sum_{h\in G}c(h)\mathbf{j}(h^{-1})\bmod p \equiv 0  \bmod p .$$
\end{proof}

The next assertion shows that not every well rounded function is strongly admissible.

\begin{thm}
\label{bounds}
Suppose that $(X,\lambda)$ is  the jacobian of a smooth projective irreducible
genus $g$ curve $\cV$ with canonical principal
 polarization, and $\delta$ is a periodic automorphism of $(X,\lambda)$ that satisfies the $p$th cyclotomic equation.
 Let $$\mathbf{a}=\mathbf{a}_{X,\delta}: G=(\bbZ/p\bbZ)^{*} \to \bbZ_{+}$$ be the corresponding multiplicity function.
  Then 
\begin{equation}
\label{boundA}
\frac{1}{p}\cdot \frac{2g}{(p-1)}-\frac{p-2}{p} \le \mathbf{a}(v) \le 
\frac{2g}{(p-1)}-\left(\frac{1}{p}\cdot \frac{2g}{(p-1)}-\frac{p-2}{p}\right)
\quad \forall v \in G.
\end{equation}
In particular, if $(p-2) < 2g/(p-1)$ then
$$1 \le  \mathbf{a}(v) \le \frac{2g}{p-1}-1 \quad \forall v \in G.$$
\end{thm}

\begin{proof}
By Theorem \ref{nonjacob},
there exists a {\sl nonnegative} integer-valued function $\mathbf{b}: G \to \bbZ_{+}$ such that
$$\mathbf{a}(h)=  
 \frac{\sum_{u\in G}\mathbf{b}(u)\mathbf{j}(-u^{-1}h)}{p}-1 \ \forall h \in G.$$
Recall that
 $$\mathbf{b}(u)\ge 0, \ \  \sum_{u\in G}\mathbf{b}(u)=\frac{2g}{p-1}+2, \  \ 1 \le \mathbf{j}(v)\le p-1.$$
 This implies that
$$\frac{\sum_{u\in G}\mathbf{b}(u)}{p} -1 \le \mathbf{a}(h)\le (p-1) \cdot \frac{\sum_{u\in G}\mathbf{b}(u)}{p} -1.$$  
This means that
$$\frac{\frac{2g}{p-1}+2}{p} -1 \le \mathbf{a}(h)\le (p-1) \cdot \frac{\frac{2g}{p-1}+2}{p} -1.$$ 
Hence,
$$\frac{1}{p}\cdot \frac{2g}{(p-1)}-\frac{p-2}{p} \le \mathbf{a}(h) \le 
\frac{2g}{(p-1)}-\left(\frac{1}{p}\cdot \frac{2g}{(p-1)}-\frac{p-2}{p}\right)
\quad \forall v \in G.$$
\end{proof}

\section{A construction of jacobians}
\label{construction}

The following theorem may be viewed as an inverse of Theorem \ref{nonjacob}.

\begin{thm}
\label{inverse}
Let $g$ be a positive integer, $p$ an odd prime, $\zeta_p\in \bbC$ a primitive $p$th root of unity, and $G=(\bbZ/p\bbZ)^{*}$.
Suppose that $(p-1)$ divides $2g$.
Let $\mathbf{b}: G \to \bbZ_{+}$ be a nonnegative integer-valued function such that
\begin{itemize}
\item[(i)]
\begin{equation}
\label{sumInverse}
 \sum_{h\in G}\mathbf{b}(h)=\frac{2g}{p-1}+2.
 \end{equation}
 \item[(ii)]
\begin{equation}
\label{inverseB}
(p-1) \cdot \mathbf{b}*\mathbf{j}(1\bmod p)=\sum_{h\in G}\mathbf{b}(h)\mathbf{j}(h^{-1})\in p\bbZ.
\end{equation}
\end{itemize}
Let $\{f_h(x) \mid h \in G\}$ be a $(p-1)$-element set of mutually prime nonzero polynomials $f_h(x)\in \bbC[x]$
that enjoy the following properties.

\begin{itemize}
\item[(1)]
$\deg(f_h)=\mathbf{b}(h)$ for all $h \in G$. In particular, $f_h(x)$ is a (nonzero) constant polynomial if and only
if $\mathbf{b}(h)=0$.
\item[(2)]
Each $f_h(x)$ has no repeated roots.
\end{itemize}

Let us consider the polynomial 
$$f(x)=f_{\mathbf{b}}(x)=\prod_{h\in G}f_h(x)^{\mathbf{j}(h^{-1})} \in \bbC[x]$$
of degree $\sum_{h\in G}\mathbf{b}(h)\mathbf{j}(h^{-1})$.
Let $\cV$ be the smooth projective model of the irreducible  plane affine curve
\begin{equation}
\label{superelliptic} 
y^p=f_{\mathbf{b}}(x)
\end{equation}
endowed with an automorphism
$\delta_{\cV}: \cV \to \cV$ induced by
$$(x,y) \mapsto (x,\zeta_p y).$$
Let $(\mathcal{J},\lambda)$ be the canonically principally polarized jacobian of $\cV$ endowed by the automorphism
$\delta$ induced by $\delta_{\cV}$.  Then $\mathcal{J}$ and $\delta$ enjoy the following properties.

\begin{itemize}
\item[(a)]
$\dim(\mathcal{J})=g$ and
$\sum_{j=0}^{p-1}\delta^j=0$ in $\End(\mathcal{J})$.
\item[(b)] 
Let $\mathbf{a}=\mathbf{a}_{\mathcal{J},\delta}: G \to \bbZ_{+}$ be the corresponding multiplicity function 
defined in \eqref{functionA}. 
Then for all $v \in G$
\begin{equation}
\label{aJ}
\mathbf{a}_{\mathcal{J},\delta}(v)=\frac{(p-1)}{p} \cdot \mathbf{b}*\mathbf{j}(-v)-1 =
\frac{(p-1)}{p}\cdot \mathbf{b}*\mathbf{j}_0(-v) +\frac{g}{p-1} =
\end{equation}
$$\frac{2g}{p-1}-\frac{(p-1)}{p}\cdot \mathbf{b}*\mathbf{j}(v) +1.$$
\end{itemize}
\end{thm}


\begin{proof}[Proof of Theorem \ref{inverse}]
If $\alpha$ is a root of $f(x)$ then there is exactly one $h\in G$ such that $\alpha$ is a root of $f_h(x)$; in addition, the multiplicity of $\alpha$
(viewed as a root of $f(x)$) is $\mathbf{j}(h^{-1})$, which is {\sl not} divisible by $p$. This implies that $f(x)$ is {\sl not} a $p$th power
in the polynomial ring $\bbC[x]$ and even in the field of rational function $\bbC(x)$. It follows from theorem 9.1 of \cite[Ch. VI, Sect. 9]{Lang} that
the polynomial $$y^p-f(x) \in \bbC(x)[y]$$ is irreducible over $\bbC(x)$.  This implies  that the polynomial in two variable
$$y^p-f(x) \in \bbC[x,y]$$ is irreducible, because every its divisor that is a polynomial in $x$ is a constant,  i.e., the affine plane curve \eqref{superelliptic}  is {\sl irreducible}
and its field of rational functions $K$ is the field of fractions of the domain 
$$A=\bbC[x,y]/(y^p-f(x))\bbC[x,y].$$ 
Let $\cV$ be the smooth projective model of the curve \eqref{superelliptic}.
Then $K$ is the field $\bbC(\cV)$ of rational functions on $\cV$; in particular,  $\bbC(\cV)$ is generated over $\bbC$  by rational functions $x,y$. Let 
$\pi: \cV \to \mathbb{P}^1$ be the regular map defined by rational function $x$. Clearly, it has degree $p$. Since 
$$\deg(\pi)=\deg(f)=\sum_{h\in G}\mathbf{b}(h)\mathbf{j}(h^{-1})$$
is divisible by $p$, the map $\pi$ is unramified at $\infty$ (see \cite[Sect. 4]{PS}) and therefore
the set of branch points of $\pi$ coincides with the set of roots of $f(x)$, which, in turn, is the disjoint union
of the sets $R_h$ of roots of $f_h(x)$. In particular, the number of branch points of $\pi$ is
$$\sum_{h\in G}\deg(f_h)=\sum_{h\in G}\mathbf{b}(h)=\frac{2g}{p-1}+2.$$ 
Clearly, $\pi$ is a Galois cover of degree $p$, i.e., the field extension
$$\bbC(\cV)/\bbC(\mathbb{P}^1)=\bbC(\cV)/\bbC(x)$$
 is a cyclic field extension of degree $p$. In addition, the cyclic Galois group
$\Gal(\bbC(\cV)/\bbC(\mathbb{P}^1))$ is generated by the automorphism $\delta_{\cV}: \cV \to \cV$ defined by
$$\delta_{\cV}: \cV \to \cV, \ (x,y) \mapsto (x,\zeta_p y).$$
 It follows from the Riemann-Hurwitz formula (see \cite[Sect. 4]{PS}) that the genus of $\cV$ is 
 $$\frac{\left(\left(\frac{2g}{p-1}+2\right)-2\right)\left(p-1\right)}{2}=g.$$
 In addition, the automorphism $\delta$ of the canonically polarized jacobian $(\mathcal{J},\lambda)$ induced by $\delta_{\cV}$ satisfies the $p$th cyclotomic equation
 $$\sum_{j=0}^{p-1}\delta^j=0 \in \End(\mathcal{J})$$
(see \cite[p. 149]{PS}). Let $B\subset \cV(\bbC)$ be the set of ramification points of $\pi$. Clearly, $B$ coincides with the set of  fixed points of $\delta_{\cV}$.
The map $x: \cV(\bbC) \to \mathbb{P}^1(\bbC)$ establishes a bijection between $B$ and the disjoint union of all $R_h$'s. Let us put
$$B_h=\{P\in B\mid x(P) \in R_h\}.$$
Then $B$ partitions onto a disjoint union of all $B_h$'s and
$$\#(B_h)=\deg(f_h)=\mathbf{b}(h) \ \forall h \in G.$$
Let $P \in   B$. The action of $\delta$ on the tangent space to $C$ at $P$ is multiplication by a certain $p$th root of unity  $\epsilon_P$.
\begin{lem}
\label{index}
$\epsilon_P=\zeta_p^{\mathbf{j}(h)}$ if and only if $P\in B_h$.
\end{lem}
\begin{proof}[Proof of Lemma \ref{index}]
Clearly, if $h_1$ and $h_2$ are {\sl distinct} elements of $G=(\bbZ/p\bbZ)^{*}$ then
$$1 \le \mathbf{j}(h_1), \  \mathbf{j}(h_2) \le p-1; \quad \mathbf{j}(h_1)\ne \mathbf{j}(h_2).$$ 
Therefore $\zeta_p^{\mathbf{j}(h_1)}\ne \zeta_p^{\mathbf{j}(h_2)}$.
Hence, in order to prove our Lemma, it suffices to check that
\begin{equation}
\label{indexP}
\epsilon_P=\zeta_p^{\mathbf{j}(h)} \ \ \text{if } \ P\in B_h.
\end{equation}
So,  let  $P\in B_h$. Then we have
$$x(P)=\alpha\in R_h, \ y(P)=0.$$
Let
$$\mathrm{ord}_P: \bbC(C) \twoheadrightarrow \bbZ$$
be the discrete valuation map attached to $P$. Then one may easily check that
$$\mathrm{ord}_P(x-\alpha)=p, \ \mathrm{ord}_P(x-\beta)=0 \ \forall \beta \in \bbC \setminus
\{\alpha\}.$$
This implies that
$$\mathbf{j}(h^{-1})\cdot \mathrm{ord}_P(x-\alpha)=\mathrm{ord}_P(y^p)=p\cdot \mathrm{ord}_P(y)$$
and therefore
\begin{equation}
\label{ordPy}
\mathrm{ord}_P(y)=\mathbf{j}(h^{-1}).
\end{equation}
In light of \eqref{jmult}, there is an integer $m$ such that
$$\mathbf{j}(h^{-1})\cdot \mathbf{j}(h)=1+pm.$$
Combining this with \eqref{ordPy}, we obtain that
$$\ord_P\left(\frac{y^{\mathbf{j}(h)}}{(x-\alpha)^m}\right)=\mathbf{j}(h^{-1}) \cdot \mathbf{j}(h)-pm=1$$
and therefore 
$t:=y^{\mathbf{j}(h)}/(x-\alpha)^m$ is a {\sl local parameter} of $\cV$ at $P$. Clearly, the action of $\delta$ multiplies
$t$ by $\zeta_p ^{\mathbf{j}(h)}$ and therefore $\epsilon_P=\zeta_p^{\mathbf{j}(h)}$, which proves the Lemma.
\end{proof}
{\bf End of Proof of Theorem \ref{inverse}}.
Now the desired result follows from Proposition \ref{hLfp} applied to $X=\mathcal{J}, \phi=\delta$ 
combined with \eqref{a0P}      and \eqref{oddAA}.
\end{proof}

\section{The $p=3$ case}
\label{Peq3}
Throughout this  this section we assume that  $p=3$ (see also \cite{ZarhinManinFest}).
We have seen (Example \ref{p3weak}) that every well rounded function of degree $g$  is  admissible 
and the  number of such functions is $(g+1)$. We have
$$G=(\bbZ/3\bbZ)^{*}=\{\bar{1}=1\bmod 3, \ \bar{2}=2 \bmod 3=-\bar{1}\}.$$
\begin{rem}
\label{aFb}
Let $\bbb: G \to \bbZ_{+}$ be a nonnegative integer valued function such that
$$\bbb(\bar{1})+\bbb(\bar{2})=g+2, \quad \bbb(\bar{1})+2\bbb(\bar{2})\in 3\bbZ.$$
It follows from Theorem \ref{nonjacob} and \eqref{aJ} that both
$$a_1=(g+1)-\frac{ \bbb(\bar{1})+2\bbb(\bar{2})}{3} \ \text{ and } \ \  \quad a_2=(g+1)-\frac{2\bbb(\bar{1})+ \bbb(\bar{2})}{3}$$
are nonnegative integers, and the function
$$\mathcal{F}_{\bbb}: G \to \bbZ_{+}, \  \ \bar{1}\mapsto a_1, \ \ \bar{2}=-\bar{1}\mapsto a_2$$
is strongly admissible.

\end{rem}

\begin{thm}
\label{MainP3}
Let ${\mathbf a}: G \to \bbZ_{+}$ be a nonnegative integer valued function such that
\begin{equation}
\label{wellR3}
{\mathbf a}(\bar{1})+{\mathbf a}(\bar{2})=g,
\end{equation}
i.e., ${\mathbf a}$ is  well rounded.

Then ${\mathbf a}$ is strongly admissible if and only if
\begin{equation}
\label{MainEq3}
\frac{g-1}{3} \le {\mathbf a}(\bar{1}), \ {\mathbf a}(\bar{2}) \le \frac{2g+1}{3}.
\end{equation}
\end{thm}
\begin{rem}
\label{countA3}
Clearly, if ${\mathbf a}$ is well rounded then ${\mathbf a}(\bar{1})$ satisfies the inequalities \eqref{MainEq3}
if and only if ${\mathbf a}(\bar{2})$ satisfies them.  
This implies that the number of strongly admissible functions of degree $g$
equals the number of integers $a$ such that
\begin{equation}
\label{MainA3}
\frac{g-1}{3} \le a \le \frac{2g+1}{3}.
\end{equation}
Indeed, let us attach to such $a$ the well rounded function ${\mathbf a}: G \to \bbZ_{+}$ defined by
\begin{equation}
\label{a1a23}
{\mathbf a}(\bar{1}):=a \ge \frac{g-1}{3}, \quad {\mathbf a}(\bar{2}):=g-{\mathbf a}(1)=g-a \le g-\frac{g-1}{3}=\frac{2g+1}{3};
\end{equation}
in addition, 
$${\mathbf a}(\bar{2})=g-a \ge g-\frac{2g+1}{3}=\frac{g-1}{3}.$$
By Theorem \ref{MainP3}, ${\mathbf a}$ is strongly admissible. Conversely, every strongly admissible function
${\mathbf a}$ is uniquely determined (as in \eqref{a1a23}) by an integer $a:={\mathbf a}(\bar{1})$ that satisfies the inequalities \eqref{MainA3}.
\end{rem}

\begin{proof}[Proof of  Theorem \ref{MainP3}]
It follows from Theorem \ref{bounds} applied to $p=3$  that every strongly admissible function ${\mathbf a}$
of degree $g$ enjoys properties  \eqref{MainEq3}. 

Conversely, suppose that ${\mathbf a}$ is well rounded
function of degree $g$ that enjoy properties  \eqref{MainEq3}. Let us consider the integers
$$b_1:=(2g+1)-3{\mathbf a}(\bar{1}), \quad b_2:=3 {\mathbf a}(\bar{1})-(g-1).$$
It follows from \eqref{MainEq3} that both $b_1$ and $b_2$ are {\sl nonnegative} integers.
In addition,
$$b_1+b_2=\left((2g+1)-3{\mathbf a}(\bar{1}) \right)+ \left(3 {\mathbf a}(\bar{1})-(g-1)\right)=(2g+1)-(g-1)=g+2;$$
$$b_1+2 b_2=2\left((2g+1)-3{\mathbf a}(\bar{1}) \right)+ \left(3 {\mathbf a}(\bar{1})-(g-1)\right)
=(3g+3)-3 {\mathbf a}(\bar{1}).$$
Hence, $b_1$ and $b_2$ are nonnegative integers such that
$$b_1+b_2=g+2, \quad b_1+2 b_2=(3g+3)-3 {\mathbf a}(\bar{1}) =3 \left(g+1- {\mathbf a}(\bar{1})\right) \in 3\bbZ.$$
It follows from Remark \ref{aFb} that if we consider the function
$$\bbb: G \to \bbZ_{+}, \quad \bar{1} \mapsto b_1, \ \bar{2} \mapsto b_2,$$
then
the function
$$\mathcal{F}_{\bbb}: G \to \bbZ_{+}, \quad 
\bar{1} \mapsto (g+1)-\frac{ \bbb(\bar{1})+2\bbb(\bar{2})}{3},  \ \  \bar{2} \mapsto (g+1)-\frac{2\bbb(\bar{1})+ \bbb(\bar{2})}{3}$$
is strongly admissible; in particular, it is well rounded.  We have
$$\mathcal{F}_{\bbb}(\bar{1})=(g+1)-\frac{ \bbb(\bar{1})+2\bbb(\bar{2})}{3}=(g+1)-\frac{3 \left(g+1- {\mathbf a}(\bar{1})\right)}{3}=
(g+1)-(g+1-{\mathbf a}(\bar{1}))={\mathbf a}(\bar{1}).$$
Since $\mathcal{F}_{\bbb}$ is well rounded,
$$\mathcal{F}_{\bbb}(\bar{2})=\mathcal{F}_{\bbb}(-\bar{1})=g-\mathcal{F}_{\bbb}(\bar{1})=g-{\mathbf a}(\bar{1})={\mathbf a}(-\bar{1})
={\mathbf a}(\bar{2}).$$
 This implies that the function ${\mathbf a}$ coincides with  the strongly admissible function $\mathcal{F}_{\bbb}$ and therefore is strongly admissible itself.
 This ends the proof.
\end{proof}

We finish this section by counting the number $A_3(g)$ of strongly admissible functions of degree $g$, using Remark \ref{countA3}.

\begin{enumerate}
\item[(1)]
If   $g=3k+1$ where $k$ is a nonnegative integer. then  $A_3(3k+1)$ is the number of integers $a$ with
$$k=\frac{g-1}{3} \le a \le \frac{2g+1}{3}=\frac{6k+3}{3}=2k+1.$$
Hence, $A_3(3k+1)=k+2$.
\item[(2)]
If   $g=3k+2$ where $k$ is a nonnegative integer. then  $A_3(3k+2)$ is the number of integers $a$ with
$$k+1/3=\frac{g-1}{3} \le a \le \frac{2g+1}{3}=\frac{6k+4}{3}=2k+1+1/3.$$
Hence, $A_3(3k+2)=k+1$.
\item[(3)]
If   $g=3k$ where $k$ is a positive integer then  $A_3(3k)$ is the number of integers $a$ with
$$k-1/3=\frac{g-1}{3} \le a \le \frac{2g+1}{3}=\frac{6k+1}{3}=2k+1/3.$$
Hence, $A_3(3k)=k+1$.
\end{enumerate}


\section{The CM case}
\label{CMcase}
We use the notation and assumptions of Section \ref{s1}.
Suppose that $\dim(X)=g=(p-1)/2$, i.e. $2g=(p-1)$. Then $X$ becomes an abelian variety of CM type with multiplication by the CM field $\bbQ(\zeta_p)$ of degree $(p-1)$. The corresponding nonnegative  multiplicity function $\mathbf{a}_{X,\delta}$ enjoys the property
$$\mathbf{a}_{X,\delta}(h)+\mathbf{a}_{X,\delta}(-h)=\frac{2g}{p-1}=1,$$
which means that for each $h \in G$ either
$$\mathbf{a}_{X,\delta}(h)=1, \quad \mathbf{a}_{X,\delta}(-h)=0$$
or
$$\mathbf{a}_{X,\delta}(h)=0, \quad \mathbf{a}_{X,\delta}(-h)=1.$$
To each $h \in G=(\bbZ/p\bbZ)^{*}$ corresponds the field embedding
$$\psi_h: \bbQ(\zeta_p) \to \bbC, \ \zeta_p \mapsto \zeta_p^h.$$
Clearly, the CM type of $X$ is the $(p-1)/2$-element set
$$\Psi=\Psi_{X,\delta}=\{\psi_h \mid \mathbf{a}_{X,\delta}(h)=1\}.$$

\begin{ex}
\label{hyper}
Let $\mathcal{C}$ be the smooth projective model of the plane affine curve $y^2=1-x^p$.
Then $\mathcal{C}$ has genus $g=(p-1)/2$ and admits an automorphism 
$\delta_{\mathcal{C}}: \mathcal{C} \to \mathcal{C}$ induced by
$$(x,y) \mapsto (\zeta_p x,y).$$
Let $(\mathcal{J},\lambda)$  be the canonically principally polarized jacobian of $\mathcal{C}$ endowed by the automorphism $\delta\in \Aut(\mathcal{J},\lambda)$ induced by $\delta_{\mathcal{C}}$.  It is known \cite[Example 15.4 (2)]{ST} that 
$\zeta_p \to \delta$ can be extended to the ring homomorphism
$\bbZ[\zeta_p] \to \End(\mathcal{J})$ (i.e., $\sum_{i=0}^{p-1}\delta^i=0$), which makes $\mathcal{J}$ an abelian variety of CM type with multiplication by $\bbQ(\zeta_p)$ and its CM type
$\Psi$ is $\{\psi_i \mid 1 \le i \le g=(p-1)/2\}$.  This means that the corresponding multiplicity function $\mathbf{a}_{\mathcal{J},\delta}(h)$ is as follows.
$$\mathbf{a}_{\mathcal{J},\delta}(i \bmod p)=1 \ \text{ if }  1 \le i \le (p-1)/2;$$
$$\mathbf{a}_{\mathcal{J},\delta}(i  \bmod p)=0 \ \text{ if }   i \ge (p-1)/2.$$
\end{ex}

Recall that $p$ is an odd prime.
The case $p=3$ (with arbitrary $g$) was discussed in detail in Section \ref{Peq3}. The following assertion deals with $p>3$ when $g=(p-1)/2$.

\begin{thm}
\label{pCM}
Let $p>3$ and $g=(p-1)/2$.  
Then  the number of strongly admissible functions of degree $g$ is $(p^2-1)/6$.
In particular,  every well rounded function of degree $g$ is  strongly admissible if and only if
$p \in \{5,7\}$.
\end{thm}

\begin{proof}
So, we need to compute the number of strongly admissible functions when $2g=p-1$.
By Theorem \ref{nonjacob}, each strongly admissible function is of the form
$\frac{(p-1)}{p} \cdot \mathbf{b}*\mathbf{j}(-v)-1$ 
where the nonnegative integer-valued function $\mathbf{b}: G \to \bbZ_{+}$ enjoys the properties
\begin{equation}
\label{part3}
\sum_{h\in G}\mathbf{b}(h)=\frac{2g}{p-1}+2=1+2=3;
\end{equation}
\begin{equation}
\label{convP}
\sum_{h\in G}\mathbf{b}(h) \mathbf{j}(h^{-1}) \in p\bbZ.
\end{equation}
Taking into account that
$$1 \le  \mathbf{j}(h^{-1}) <p \quad \forall h \in G,$$
we conclude that 
$$\sum_{h\in G}\mathbf{b}(h) \mathbf{j}(h^{-1})<\left(\sum_{h\in G}\mathbf{b}(h)\right)p=3p.$$
Hence, \eqref{convP} means that
either
\begin{equation}
\label{SUMp}
\sum_{h\in G}\mathbf{b}(h) \mathbf{j}(h^{-1}) = p 
\end{equation}
or
\begin{equation}
\label{SUM2p}
\sum_{h\in G}\mathbf{b}(h) \mathbf{j}(h^{-1}) =  2p.
\end{equation}
Let us compute the number of functions $\mathbf{b}$ that enjoy either properties
\eqref{part3} and \eqref{SUMp} or properties
\eqref{part3} and \eqref{SUM2p}.
First, let us prove that
\begin{equation}
\label{b012}
\mathbf{b}(h) \in \{0,1,2\} \ \forall h \in G.
\end{equation}
Indeed, it follows readily from \eqref{part3} that
$$\mathbf{b}(h) \in \{0,1,2,3\} \ \forall h \in G.$$
Suppose that $\mathbf{b}(v)=3$ for some $v \in G$. Then it follows from \eqref{part3} that all other values of $\mathbf{b}$ are zeros. Now \eqref{convP} implies that 
$$3\cdot  \mathbf{j}(v^{-1})=\mathbf{b}(v) \mathbf{j}(v^{-1})=\sum_{h\in G}\mathbf{b}(h) \mathbf{j}(h^{-1})  \in p\bbZ.$$
Since   a positive integer $\mathbf{j}(v^{-1})$ is strictly less than $p$,  the prime $p$ must divide $3$, which is not true, because $p  >3$. The obtained contradiction proves 
\eqref{b012}.

Now notice that
\begin{equation}
\label{jMINj}
\mathbf{j}(v)+\mathbf{j}(-v)=p \ \forall v \in G
\end{equation}
(it follows readily from the very definition of $\mathbf{j}$).  Let us consider the function
\begin{equation}
\label{barB}
\bar{\mathbf{b}}: G \to \bbZ_{+}, \ h\mapsto \mathbf{b}(-h).
\end{equation}
Clearly,
$$\sum_{h \in G}\bar{\mathbf{b}}(h)=\sum_{h \in G}\mathbf{b}(h)=3; \quad
\bar{\mathbf{b}}(G)=\mathbf{b}(G) \subset \{0,1,2\}.$$
On the other hand,
$$\sum_{h\in G}\bar{\mathbf{b}}(h) \mathbf{j}(h^{-1}) =
\sum_{h\in G}\mathbf{b}(-h) \mathbf{j}(h^{-1})=
\sum_{h\in G}\mathbf{b}(h) \mathbf{j}(-h^{-1})=
\sum_{h\in G}\mathbf{b}(h) \left(p-\mathbf{j}(h^{-1})\right)=$$
$$p \left(\sum_{h\in G}\mathbf{b}(h) \right)-\left(\sum_{h\in G}\mathbf{b}(h) \mathbf{j}(h^{-1})\right)=3p-\left(\sum_{h\in G}\mathbf{b}(h) \mathbf{j}(h^{-1})\right).$$
It follows that 
$$\sum_{h\in G}\bar{\mathbf{b}}(h) \mathbf{j}(h^{-1}) =3p-p=2p$$
if $\mathbf{b}$ satisfies \eqref{SUMp}. On the other hand, if 
$\mathbf{b}$ satisfies \eqref{SUM2p} then
$$\sum_{h\in G}\bar{\mathbf{b}}(h) \mathbf{j}(h^{-1}) =3p-2p=p.$$
It follows from Theorem \ref{inverse} combined with Remark \ref{round}(iii) that
$$h \mapsto \bar{\mathbf{a}}(h)=\mathbf{a}(-h)=\frac{2g}{p-1}-\mathbf{a}(h)
=1-\mathbf{a}(h)$$
is a strongly admissible function of degree $g=(p-1)/2$. This implies that  $\mathbf{a}(1 \bmod p)=1$ (resp. $0$) if and only if
$\bar{\mathbf{a}}(1 \bmod p) =0$ (resp. $1$).

Notice that
$$\mathbf{a}(-1 \bmod p) =\frac{\sum_{h \in G}\mathbf{b}(h) \mathbf{j}(h^{-1})}{p}-1.$$
This implies that $\mathbf{a}(1 \bmod p) =1$ (resp. $0$) if and only if
$\sum_{h \in G}\mathbf{b}(h) \mathbf{j}(h^{-1})=p$ (resp. $2p$).


We will need the following two auxiliary assertions.
\begin{lem}
\label{partitions}
Let $Q_3(p)$  be the set of partitions in 3 parts of $p$.

There is a natural bijection between $Q_3(p)$ and the set of strongly admissible functions
$\mathbf{a}: G \to \bbZ_{+}$ of degree $g=(p-1)/2$ such that
$$\mathbf{a}(1 \bmod p)=1.$$
\end{lem}

\begin{lem}
\label{partitions3}

The cardinality of $Q_3(p)$ is $(p^2-1)/12$.
\end{lem}

{\bf End  of proof of Theorem \ref{pCM} (modulo Lemmas  \ref{partitions} and \ref{partitions3})}.  
The map $\mathbf{a} \mapsto \bar{\mathbf{a}}$ is a bijection between the sets of strongly admissible functions that take on at $1\bmod p$ the the values $0$ and $1$ respectively. Applying both lemmas, we conclude that the number of all strongly admissible functions functions of degree $(p-1)/2$ is twice the cardinality of $Q_3(p)$, i.e., this number is
$$2 \cdot \frac{p^2-1}{12}=\frac{p^2-1}{6}.$$
which  proves first assertion of Theorem \ref{pCM}. As for the second one, recall that the number of well rounded functions of degree $(p-1)/2$ is $2^{(p-1)/2}$. It remains to notice that $2^{(p-1)/2}>(p^2-1)/6$ if $p \ge 11$ while
$2^{(p-1)/2}=(p^2-1)/6$ if $p \in \{5,7\}$.
\end{proof}

\begin{proof}[Proof of Lemma \ref{partitions}]
If $v \in G$ then we write ${\mathbf{\delta}}_v: G \to \bbZ_{+}$ for the corresponding {\sl delta function} that takes on value $1$ at $v$ and vanishes elsewhere. 

Each element $M$ of $Q_3(p)$ may be viewed as a non-ordered triple $\{m_1,m_2,m_3\}$ of positive integers, whose sum $\sum_{i=1}^3 m_i=p$. Clearly, 
$$p > m_i \ne p-m_j \quad \forall i,j \in \{1,2,3\}.$$
 Let us define
$$h_i:=(m_i \bmod p)^{-1} \in (\bbZ/p\bbZ)^{*}=G.$$
Clearly,
\begin{equation}
\label{ijminus}
h_i^{-1}=m_i \bmod p, \ \  \mathbf{j}(h_i^{-1})=m_i, \ \  
h_i^{-1} \ne - h_j^{-1} \quad \forall i,j=1,2,3.
\end{equation}
Let us consider the function
$$\mathbf{b}_M=\sum_{i=1}^3 \mathbf{\delta}_{h_i} : G \to \bbZ_{+}.$$
We have
$$\sum_{h \in G}\mathbf{b}_M(h)=\sum_{i=1}^3 
\left(\sum_{h \in G}\mathbf{\delta}_{h_i}(h)\right)=\sum_{i=1}^3 1=3,$$
$$\sum_{h \in G}\mathbf{b}_M(h) \mathbf{j}(h^{-1})=\sum_{i=1}^3 
\left(\sum_{h \in G}\mathbf{\delta}_{h_i}(h)\mathbf{j}(h^{-1})\right)=
\sum_{i=1}^3\mathbf{j}(h_i^{-1})=
\sum_{i=1}^3 m_i=p.$$
By Theorem \ref{inverse},
 the function
$$\mathbf{a}_M: G \to \bbZ_{+}, \ \ v \mapsto \frac{p-1}{p}b_M*\mathbf{j}(-v)-1=
\frac{\sum_{h \in G}b_M(h)\mathbf{j}(-h^{-1}v)}{p}-1$$
is strongly admissible; in addition, 
$$
\mathbf{a}_M(1 \bmod p)=\frac{2g}{p-1}-\mathbf{a}_M(-1 \bmod p)=$$
$$1-\left(\frac{\sum_{h \in G}b_M(h)\mathbf{j}(h^{-1})}{p}-1\right)=
1-\left(\frac{p}{p}-1\right)=
1. $$

Let us consider the map
$$\mathcal{T}: M \to \mathbf{a}_M$$ from $Q_3(p)$ to the set of strongly admissible functions
 $G \to \bbZ_{+}$ of degree $(p-1)/2$  that take on value $1$ at $1 \bmod p$. Let us check that $\mathcal{T}$ is {\sl bijective}. 
 
  In order to check the injectiveness of $\mathcal{T}$, notice that in light of the last inequality of \eqref{ijminus}
 $$\mathbf{b}_M(h)=\max\{\mathbf{b}_M(h)-\mathbf{b}_M(-h), \ 0 \},$$
 which means that  the function $\mathbf{b}_M(h)$ is uniquely determined by its ``odd part''. It follows from Corollary \ref{uniqueOdd} that $\mathcal{T}$ is {\sl injective}.

In order to check that  $\mathcal{T}$ is {\sl surjective}, let us start with a strongly admissible function $\mathbf{a}: G \to \bbZ_{+}$ of degree $(p-1)/2$ with $\mathbf{a}(1\bmod p)=1$.
We  know that there is a function
$$\mathbf{b}: G \to \{0,1,2\} \subset \bbZ_{+}$$
such that
\begin{equation}
\label{sumB}
\sum_{h\in G}\mathbf{b}(h)=3,  \quad \sum_{h \in G}b(h)\mathbf{j}(h^{-1})=p;
\end{equation}
$$\mathbf{a}(v)=
\frac{p-1}{p}\mathbf{b}*\mathbf{j}(-v)-1=
\frac{\sum_{h \in G}b(h)\mathbf{j}(-h^{-1}v)}{p}-1 \ \forall v \in G.$$
We need to find a partition $M$ of $p$ in three parts such that $\mathbf{b}=\mathbf{b}_M$
(which would imply that that $\mathbf{a}=\mathbf{a}_M$). Recall that $\mathbf{b}$ is a nonnnegative integer-valued function.
In light of of first equality of \eqref{sumB}, 
there is a 3-element collection $\{h_1,h_2,h_3\}$ of (not necessarily distinct) elements of $G$ such that
$$\mathbf{b}=
\mathbf{\delta}_{h_2}+\mathbf{\delta}_{h_2}+\mathbf{\delta}_{h_1}.$$
 Let us define the 3-element collection
$$M:=\{m_1=\mathbf{j}(h_1^{-1}),\  m_2=\mathbf{j}(h_2^{-1}),
 \ m_3=\mathbf{j}(h_3^{-1})\}$$
 of (not necessarily distinct) positive integers.
 In light of second equality of \eqref{sumB},
 $$p=\sum_{h \in G}b(h)\mathbf{j}(h^{-1})=\sum_{i=1}^3 \mathbf{j}(h_i^{-1})=
 m_1+m_2+m_3,$$
 i.e., $M$ is a partition of $p$ in three parts.
 Clearly,  we have
 $\mathbf{b}=\mathbf{b}_M$
 and therefore $\mathbf{a}=\mathbf{a}_M$, which proves the surjectiveness of $\mathcal{T}$.
\end{proof}

\begin{proof}[Proof of Lemma \ref{partitions3}]
We know that the prime $p>3$ is congruent to $\pm 1$ modulo $6$. This implies  a well known assertion that that $p^2-1$ is divisible by $12$ (and even by $24$). This implies that 
$(p^2-1)/12$ is the {\sl nearest integer} to $p^2/12$.

It is well known \cite[Sect. 3.1, p. 16]{Andrews} that the cardinality $\#(Q_3(p))$ of
$Q_3(p)$ coincides with the number of partitions of $p-3$ in at most three parts. On the other hand,  it is known 
\cite[Sect. 6.2, p. 58]{Andrews} that the latter number is the nearest integer to
$\frac{((p-3)+3)^2}{12}$, i.e., is the nearest integer to $p^2/12$, which (as we have already seen) is 
$(p^2-1)/12$. This ends the proof of  Lemma \ref{partitions3}.
\end{proof}

\section{Self-products of abelian varieties that are not jacobians}
\label{RosatiS}
\begin{sect}
\label{Rosati}
Let us start by recalling some generalities about endomorphism algebras of abelian varieties and Rosati (anti)involutions
\cite[Sect. 19--21]{MumfordAV}.

Let $X$ be a positive-dimensional abelian variety over an arbitrary algebraically closed field,  $\End(X)$ its endomorphism ring and $\cZZ_X$ the center of of $\End(X)$. Then
$\cZZ_X^{0}=\cZZ_X\otimes \bbQ$ is the center of the  endomorphism algebra $\End^0(X):=\End(X)\otimes \bbQ$; the latter is a finite-dimensional semisimple $\bbQ$-algebra \cite[Sect. 19, Cor. 2]{MumfordAV}. More precisely, 
  there is an isomorphism of $\bbQ$-algebras
$$\End^0(X) \cong \oplus_{i=1}^{\ell} \mathcal{H}_i=: \mathcal{H}$$ 
where $\mathcal{H}_i$ is a central simple algebra over a number field $K_i$ of dimension $d_i^2$  where $d_i$ is a positive integer
while $K_i$ is either totally real or a CM field \cite[Sect. 21, Application I]{MumfordAV}.  In what follows, we will identify $\End^0(X)$
with $\mathcal{H}$ and will  view each  direct summand $\mathcal{H}_i$ as the corresponding two-sided ideal  of $\mathcal{H}$.
Then 
$$\cZZ_X^{0}= \oplus_{i=1}^{\ell} K_i \subset \oplus_{i=1}^{\ell} \mathcal{H}_i=\mathcal{H}.$$
We write $e_i$ for the identity element of $\mathcal{H}_i$, viewed as the certain idempotent of $\End^0(X)$.
Clearly,
$$e_i \in e_i \cZZ_X^{0}= K_i\subset \mathcal{H}_i=e_i  \End^0(X)= \End^0(X) e_i;$$
$$ e_i e_j=0 \ \ \forall i \ne j; \quad \sum_{i=1}^{\ell} e_i=1 \in \mathcal{H}.$$
In addition, $\{e_1, \dots e_{\ell}\}$ is the list of all minimal idempotents in $\cZZ_X^{0}$.

One may view $\End(X)$ and $\cZZ_X$ as orders in $\End^0(X)=\mathcal{H}$ and $\cZZ_X^0$ respectively.

We write
$\mathrm{tr}_{ \mathcal{H}_i/K_i}: \mathcal{H}_i \to K_i$ for the $K_i$-linear {\sl reduced trace} map of the central simple $K_i$-algebra  $\mathcal{H}_i$ over $K_i$
 \cite[Ch. 2, Subsect. 9a]{Reiner}. Recall its definition and basic properties. Let $E_i$ be an overfield of $K_i$ that splits  $\mathcal{H}_i$. There exists  an isomorphism
 $h_i:  \mathcal{H}_i\otimes_{K_i} E_i  \cong \Mat_{d_i}(E_i)$ of central simple $E_i$-algebras
 (where $\Mat_{d_i}(E_i)$ is the algebra of square matrices of size $d_i$ with entries in $E_i$). Then  for all $u_i \in \mathcal{H}_i$, one defines
 $\mathrm{tr}_{ \mathcal{H}_i/K_i}(u_i)$ as the trace of the matrix $h_i(u_i\otimes 1)$; this trace
  lies in $K_i$ and does {\sl not} depend on the choice of $E_i$ and $h_i$. E.g., if $u_i \in K_i\subset \mathcal{H}_{i}$ then
 $h_i(u\otimes 1)$ is the {\sl scalar matrix} $u_i \mathrm{I}_{d_i}$ where $\mathrm{I}_{d_i}$ is the identity matrix in $ \Mat_{d_i}(E_i)$.
 The trace of  the scalar matrix $u_i \mathrm{I}_{d_i}$ is obviously $d_i u_i$, which implies that $\mathrm{tr}_{\mathcal{H}_{i}/K_i}$ coincides with multiplication by $d_i$ on $K_i$. We also have 
 $$\mathrm{tr}_{\mathcal{H}_{i}/K_i}(u_i v_i)=\mathrm{tr}_{\mathcal{H}_{i}/K_i}(v_i u_i) \quad \forall u_i, v_i \in \mathcal{H}_i.$$
We write
$$\mathrm{tr}_{\mathcal{H}/\bbQ}: \mathcal{H}=\oplus_{i=1}^{\ell} \mathcal{H}_i \to \bbQ$$
for the {\sl reduced trace map} on the $\bbQ$-algebra $\mathcal{H}$, which is defined as
$$(u_1, \dots, u_{\ell}) \mapsto \sum_{i=1}^{\ell} \mathrm{Tr}_{K_i/\bbQ}( \mathrm{tr}_{ \mathcal{H}_i/K_i}(u_i)) \quad
\ \text{ where } \  u_i \in \mathcal{H}_i \ \ \forall i,$$
and  $\mathrm{Tr}_{K_i/\bbQ}: K_i \to \bbQ$ is the usual trace map attached to the field extension $K_i/\bbQ$
\cite[Ch. 2, Subsect. 9b]{Reiner}.  Clearly, $\mathrm{tr}_{\mathcal{H}/\bbQ}$ coincides with 
the composition $\mathrm{Tr}_{K_i/\bbQ} \circ \mathrm{tr}_{\mathcal{H}_{i}/K_i}$ on 
$\mathcal{H}_i\subset \mathcal{H}$
and therefore coincides with $d_i  \cdot \mathrm{Tr}_{K_i/\bbQ}$ on $K_i$.
It is also clear that $\mathrm{tr}_{\mathcal{H}/\bbQ}$ is  a $\bbQ$-linear map such that
$$\mathrm{tr}_{\mathcal{H}/\bbQ}(uv)=\mathrm{tr}_{\mathcal{H}/\bbQ}(vu) \quad \forall u,v \in \mathcal{H}.$$

If $\lambda$ is a polarization on $X$ then it gives rise to the so called {\sl Rosati involution} (actually, antiinvolution)
$$\End^0(X) \to \End(X), \ u \mapsto u^{*},  \ (u^*)^{*}=u, \ (uv)^{*}=v^{*}u^{*} \ \forall u, v \in \End^0(X)=\mathcal{H},$$
which is a $\bbQ$-linear map \cite[Sect. 20]{MumfordAV}. The Rosati involution is {\sl positive} \cite[Sect. 21]{MumfordAV}, i.e.,
$$\mathrm{tr}_{\mathcal{H}/\bbQ}(u u^{*})=\mathrm{tr}_{\mathcal{H}/\bbQ}(u^{*}u) >0 \quad \forall \ \text{nonzero} \ u \in \mathcal{H}.$$

Clearly,  this involution  defines an automorphism (an honest involution)
\begin{equation}
 \label{RosatiZ0}
\cZZ_X^0 \to \cZZ_X^0, \ u \to u^{*}
\end{equation}
of the commutative semisimple $\bbQ$-algebra  $\cZZ_X$.  It follows that the Rosati involution permutes the set of minimal idempotents 
$\{e_1, \dots e_{\ell}\}$. On the other hand, if $e_i^{*}=e_j$  with $j \ne i$ then $e_i e_j=0$, hence,
$$\mathrm{tr}_{\mathcal{H}/\bbQ}(e_i e_i^*)=\mathrm{tr}_{\mathcal{H}/\bbQ}(e_i e_j)=\mathrm{tr}_{\mathcal{H}/\bbQ}(0)=0,$$
which contradicts the positivity of the Rosati involution.  Hence, $e_i^{*}=e_i^{*}$ for all $i$. It follows that the Rosati involution
sends $\mathcal{H}_i =e_i \mathcal{H}$ to $\mathcal{H}e_i=\mathcal{H}_i$, i.e., $\mathcal{H}_i$ goes to itself under this involution.
This implies that the center $K_i$ of $\mathcal{H}_i$ goes to itself under this involution. The positiveness of the Rosati involution on $\mathcal{H}$
implies that for all {\sl nonzero} $u \in K_i$
$$d_i \cdot  \mathrm{Tr}_{K_i/\bbQ}(u u^{*})> 0, \ \text{ i.e. } \  \mathrm{Tr}_{K_i/\bbQ}(u u^{*})> 0.$$
It follows that  $u \mapsto u^{*}$ is a positive involution on $K_i$. By Albert's classification \cite[Sect. 21, Application I]{MumfordAV}, this involution acts on $K_i$
 as the identity map if $K_i$ is totally real and  as the complex conjugation if $K_i$  is a CM field. This implies the $\bbQ$-algebra automorphism \eqref{RosatiZ0} of the center does {\sl not} depend on the choice of $\lambda$.


On the other hand, if $u$ is automorphism of $X$ then $u \in \Aut(X,\lambda)$ if and only if $u^{*}u=1_X$ 
\cite[Definition in Sect. 8 and Sect. 21, Proof of Th. 5, first paragraph]{MumfordAV}. 
(Over $\bbC$ this well known assertion follows readily from
the very definition of Rosati involution \cite[Sect. 5.1, p. 114]{BL} combined with the commutative diagram in \cite[Cor. 2.4.6 on p. 36]{BL}.)
It follows that if $u \in \cZZ_X$ respects one polarization on $X$ then it respects all of them!  
\end{sect}
Now let us return to our study of complex abelian varieties with automorphisms of prime period $p$.
\begin{thm}
\label{CentralEx}
Let $p$ be an odd prime, $g$ a positive unteger such that $(p-1)$ divides $2g$,
$X$  a complex $g$-dimensional abelian variety endowed with the ring embeddings
$$\kappa: \bbZ[\zeta_p]\hookrightarrow \cZZ_X \subset \End(X), \ 1 \mapsto 1_X$$
where $\cZZ_X$ is the center of $\End(X)$. Let us put 
$$\delta:=\kappa(\zeta_p) \in  \cZZ_X \subset \End(X).$$
If $X$ is isomorphic as an algebraic variety to the jacobian of a smooth connected projective curve of genus $g$ then it enjoys one of the following two properties.

\begin{itemize}
 \item[(i)]
$$\frac{2g}{p-1} \le p-2.$$
\item[(ii)]
Every  primitive $p$th root of unity $\zeta$ is an eigenvalue of 
$\delta_{\Omega}: \Omega^1(X) \to \Omega^1(X)$ and its multiplicity is 
greater or equal than
$$\frac{1}{p}\cdot \frac{2g}{p-1} - \frac{p-2}{p}.$$
\end{itemize}
\end{thm}

\begin{proof}
 Clearly, $\kappa$ extends by $\bbQ$-linearity to the embedding of $\bbQ$-algebras
 $$\bbQ(\zeta_p)\hookrightarrow \cZZ_X \subset \End^0(X), \quad
 1 \mapsto 1_X, \ \zeta_p \mapsto \delta,$$
 which we continue to denote by $\kappa$.
 Since $\bbQ(\zeta_p)$ is a CM field, the center $\cZZ_X$ is either a CM field or a product of CM fields, and 
 (as we have seen in  Subsection \ref{Rosati}) the Rosati involution coincides with the complex conjugation on each factor. It follows that
 $$\kappa(\bar{u})=\left(\kappa(u)\right)^* \quad \forall u \in \bbQ(\zeta_p).$$
 Taking into account that $\bar{\zeta_p}\zeta_p=1$ (where $\bar{\zeta_p}$ is the complex-conjugate of $\zeta_p$), we conclude that
 $$\delta^* \delta =\kappa\left(\bar{\zeta_p}\right)\kappa(\zeta_p)=
 \kappa\left(\bar{\zeta_p}\zeta_p\right)=\kappa(1)=1_X,$$
 i.e., $\delta^* \delta =1_X$, which means that $\delta \in \Aut(X,\lambda)$ for any polarization $\lambda$ on $X$. Now the desired result follows from 
 Theorem \ref{bounds}.
\end{proof}

\begin{cor}
\label{powersC}
 Let $p$ be an odd prime, $g_0$ a positive unteger such that $(p-1)$ divides $2g_0$.
 Let 
$Y$  be a complex $g_0$-dimensional abelian variety endowed with the ring embeddings
$$\kappa: \bbZ[\zeta_p]\hookrightarrow \cZZ_Y \subset \End(Y), \ 1 \mapsto 1_Y.$$
Let us put 
$$\delta_Y:=\kappa(\zeta_p) \in  \cZZ_Y \subset \End(Y).$$
Suppose that there is a primitive $p$th root of unity $\zeta$ that enjoys one of the following two properties.
\begin{itemize}
 \item[(i)]
 $\zeta$ is not an eigenvalue of $\delta_{Y,\Omega}: \Omega^1(X) \to \Omega^1(X)$.  
 \item[(ii)]
 $\zeta$ is an eigenvalue of $\delta_{Y,\Omega}$ but its multiplicity $a$ is strictly less than
 $$\frac{1}{p} \cdot \frac{2g_0}{p-1}.$$
\end{itemize}

Then the self-product $Y^r$ of $Y$ is not isomorphic as an algebraic variety to the jacobian of a smooth connected projective curve for all positive integers 
$$r >  M:=\frac{p-2}{\frac{1}{p} \frac{2g_0}{p-1}-a}
.$$
(In the case (i) we put $a=0$.)
\end{cor}

\begin{proof}
First, notice that the existence of $\kappa$ implies that the ratio
$$\frac{2g_0}{p-1}=\frac{2\dim(Y)}{p-1}$$
is an {\sl integer}.

Let $r>M$ be a positive integer and 
$X=Y^r$. Then $g:=\dim(Y)=r g_0$, and
the endomorphism ring $\End(X)=\End(Y^r)$ is canonically isomorphic to the matrix ring $\Mat_r(\End(Y))$ of size $r$ over $\End(Y)$  with the same center
as $\End(Y)$. In particular, the image of the {\sl diagonal embedding}
$$\kappa_k: \bbZ[\zeta_p] \hookrightarrow \oplus_{i=1}^k \End(Y)\subset \Mat_r(\End(Y)), \quad u \mapsto (\kappa(u), \dots, \kappa(u)) \ \   (k \text{ times)} $$
lies in the center $\cZZ_Y$ of  $\End(Y)$. On the other hand, 
$$\Omega^1(X)=\Omega^1(Y^r)=\oplus_{i=1}^r \Omega^1(Y)$$
and the linear operator 
$$\delta_{X,\Omega}: \Omega^1(X) \to \Omega^1(X)$$
(which  acts ``diagonally'' on $\Omega^1(X)$)
enjoys the following properties.

Its spectrum coincides with the spectrum of 
$\delta_{Y,\Omega}$. In addition,
 if an eigenvalue $\gamma$ of $\delta_{Y,\Omega}$ has multiplicity $a$ then it has multiplicity $ka$, viewed as an eigenvalue of $\delta_{X,\Omega}$.
 
 Assume now that $X$ is isomorphic to the jacobian of a smooth connected projective curve of genus $g$.  If $\zeta$ is not an eigenvalue of $\delta_{X,\Omega}$ then it is not an eigenvalue of $\delta_{Y,\Omega}$ and therefore
 $$\frac{2r g_0}{p-1}=\frac{2g}{p-1} \le p-2.$$
 This implies that 
 $$r \le \frac{p-2}{\frac{2g_0}{p-1}}=M,$$
 because in this case $a=0$. This contradicts our assumption on $r$.
 
 So,  $\zeta$ is an eigenvalue of $\delta_{Y,\Omega}$ and its multiplicity $a$ satisfies the inequality
 $$a<\frac{1}{p} \cdot \frac{2g_0}{p-1}=    \frac{1}{p} \cdot \frac{2\dim(Y)}{p-1}.$$
 Since both $a$ and $\frac{2g_0}{p-1}$ are integers,
 \begin{equation}
 \label{Ya}
 a\le 
 \frac{1}{p} \cdot \frac{2g_0}{p-1}-\frac{1}{p}.
\end{equation}
This implies that $\zeta$ is an eigenvalue of $\delta_{X,\Omega}$ and its multiplicity equals $ra$, which  satisfies the inequality
 $$ra \le r \left(\frac{1}{p} \cdot \frac{2g_0}{p-1}-\frac{1}{p}\right)= \frac{1}{p} \cdot \frac{2rg_0}{p-1}-\frac{r}{p}=
  \frac{1}{p} \cdot \frac{2g}{p-1}-\frac{r}{p}.$$

 Taking into account that
 $$\frac{1}{p}\frac{2g_0}{p-1}-a>0, \ \ r>M,$$
 we get
 $$ra=r \cdot \frac{1}{p}\frac{2g_0}{p-1} -r \cdot \left(\frac{1}{p}\frac{2g_0}{p-1}-a\right)< \frac{1}{p}\frac{2g_0r}{p-1}-M
 \left(\frac{1}{p}\frac{2g_0}{p-1}-a\right)=\frac{1}{p}\frac{2g}{p-1}-\frac{p-2}{p}.$$
  In other words, $\zeta$ is an eigenvalue of  $\delta_{X,\Omega}$ with multiplicity that is strictly less than
 $$\frac{1}{p}\frac{2g}{p-1}-\frac{p-2}{p},$$
 which contradicts to Theorem \ref{bounds}. The obtained contradiction implies that $X$ is not isomorphic to a jacobian. 
\end{proof}

\begin{ex}
\label{centralD}
Let $p$ be an odd prime and $n\ge 4$ an integer such that $p$ does not divide $n$. We have
\begin{equation}
\label{npc}
n=ap+c; \quad a,c \in \bbZ_+;  1 \le c \le p-1;  \quad 0 \le c-1 \le p-2.
\end{equation}

Let $f(x) \in \bbC[x]$ be a degree $n$ polynomial without repeated roots. Let $\cV_{f,p}$ be the smooth
projective model of the smooth plane affine curve $y^p=f(x)$.  The genus $g_0$ of $\cV_{f,p}$ is $(n-1)(p-1)/2$; hence
$$\frac{2g_0}{p-1}=n-1=ap+(c-1).$$
There is an automorphism
$\tilde\delta \in \mathrm{Aut}(\cV_{f,p})$ of $\cV_{f,p}$ defined  by 
$$(x,y) \mapsto (x,  \zeta_p y).$$
Let $(J(\cV_{f,p}), \Theta)$ be the  canonically principally polarized jacobian of $C_{f,p}$.
By Albanese functoriality $\tilde\delta$ induces  the automorphism  $\delta$ of $(\cJ,\Theta)$,   
which  satisfies the $p$th cyclotomic equation. 
   The corresponding {\sl multiplicity function} is
  $$\mathbf{a}_{\cJ(\cC_{f,p}), \delta}:  (-k \bmod p) \mapsto [nk/p] \  \ (1 \le k \le p-1),$$
  see \cite[Remark 3.7]{ZarhinCamb}.
  In particular, in light of \eqref{npc},
  \begin{equation}
   \label{cm1}\begin{aligned}
  \mathbf{a}_{\cJ(\cV_{f,p}), \delta}(-1\bmod p)=[n/p] =[(ap+c)/p]=a=\\
  \frac{(n-1)-(c-1)}{p}=\frac{1}{p}\frac{2g_0}{p-1}-\frac{(c-1)}{p}
  \ge \frac{2g_0}{p-1}-\frac{p-2}{p}.\end{aligned}
  \end{equation}
  
  Let us assume that
  $$c > 1,$$
  i.e., $p$ does {\sl not} divide $n-1$. Then either $\zeta_p^{-1}$ is not an eigenvalue of
  $$\delta_{\Omega}: \Omega^1(\cJ(\cV_{f,p})) \to \Omega^1(\cJ(\cV_{f,p})) $$ 
  or it is an eigenvalue but  its  multiplicity $a$ is strictly less than 
  $\frac{1}{p}\frac{2g_0}{p-1}$. Now it follows from Corollary \ref{powersC}
  that if $\delta$ is a {\sl central element} of $\End(\cJ(\cV_{f,p}))$ then $\cJ(\cV_{f,p})^r$ is {\sl not} isomorphic as an algebraic variety to the jacobian of a smooth connected projective curve for all positive integers
  $$r>\frac{p-2}{\frac{1}{p} \frac{2g_0}{p-1}-a}=
  \frac{p-2}{\frac{1}{p} (n-1)-a}=\frac{p-2}{\frac{1}{p} (pa+c-1)-a}=
  \frac{p-2}{(c-1)/p}=\frac{p(p-2)}{c-1}.$$
  In other words, $\cJ(\cV_{f,p})^r$ is not isomorphic to a jacobian (even if one ignores polarizations) if 
  $\delta$ is central, $c>1$, and
  $$r>\frac{p(p-2)}{c-1}.$$
\end{ex}

\begin{thm}
Let $p$ be an odd prime and $n\ge 5$ an integer. Suppose that $p$ does not divide $n(n-1)$, i.e., $n=ap+c$ where $a,c \in \bbZ_{+}$,
and $2 \le c \le p-1$.
 Let $K$ be a subfield of $\bbC$ that contains $\zeta_p$.  
 Let 
 $f(x) \in K[x] \subset \bbC[x]$
 be a degree $n$ irreducible polynomial over $K$, whose Galois group $\Gal(f/K)$ over $K$ 
 is either the full symmetric group $\mathbf{S}_n$ or  the alternating group $\mathbf{A}_n$.
 enjoys one of the following two properties.
 \begin{itemize}
  \item
  $n \ge 4$ and $\Gal(f)$ coincides with the full symmetric group $\mathbf{S}_n$;
  \item
  $n \ge 5$ and $\Gal(f)$ coincides with the alternating group $\mathbf{A}_n$.
 \end{itemize}

Then $\cJ(\cV_{f,p})^r$ is {\sl not} isomorphic as an algebraic variety to the jacobian of a smooth connected projective curve for all positive integers
$$r>\frac{p(p-2)}{c-1}.$$
In particular,  $\cJ(\cV_{f,p})^r$ is {\sl not} isomorphic to a jacobian if 
$r>p(p-2)$.
\end{thm}

\begin{proof}
 Our assumptions on $n$ and $f(x)$ imply that the endomorphism ring 
 $\End(J(\cV_{f,p}))$ equals $\bbZ[\delta]\cong \bbZ[\zeta_p]$.
 Indeed, 
 in the case  $n \ge 5$, it follows from \cite[Th. 1.1]{ZarhinCamb} (see also  corrigendum in \cite[Remark 1.4]{ZarhinA5} for $n=5$);
 in the case $n=4$, it follows from  \cite[Th. 1.3]{ZarhinKumar}, because $\mathbf{S}_4$ does not have a normal subgroup of index 3.
  
 In particular, the ring $\End(J(\cV_{f,p}))$ is commutative and therefore $\delta$ lies in its center. Now the desired results follow readily from considerations of Example \ref{centralD} if we take into account that
 $c-1 \ge 2-1=1$ and therefore
 $p(p-2) \ge  p(p-2)/(c-1)$.
 
\end{proof}

\begin{thm}[about self-products of abelian varieties of CM type]
\label{CMrev}
Let $p$ be an odd prime, $Y$ a complex abelian variety of dimension $(p-1)/2$ endowed with a ring isomorphism 
$\kappa: \bbZ[\zeta_p] \cong \End(Y)$.

 If $r>(p-2) $ is an integer then $Y^r$ is not isomorphic as an algebraic variety to the jacobian of a smooth connected projective curve.
\end{thm}
\begin{proof}
Let us consider
$\delta:=\kappa(\zeta_p) \in \Aut(Y)$.
Clearly, $\delta$ satisfies the $p$th cyclotomic equation in $\End(Y)$. On the other hand, the complex vector space $\Omega^1(Y)$ has dimension $(p-1)/2$ that is strictly less that $(p-1)$. Hence, there is a primitive $p$th root of unity, say, $\zeta$ that is {\sl not} an eigenvalue of $\delta_{\Omega}: \Omega^1(Y) \to \Omega^1(Y)$. Now the desired result follows from Corollary \ref{powersC} applied to 
$g_0=(p-1)/2$ and $a=0$.
\end{proof}

\begin{ex}
 Let $p=3$ and $Y$ an elliptic curve with $\End(Y)=\bbZ[\zeta_3]$. It follows from Theorem \ref{CMrev} applied to $p=3$  that if $r \ge 2$ is an integer 
 then $Y^r$ is {\sl not} isomorphic as an algebraic variety to the jacobian of a smooth connected projective curve. (This assertion is well known for $r=2$, see \cite{Hn}.)
\end{ex}

\end{document}